\tikzstyle{decision} = [diamond, draw, fill=blue!20, 
\tikzstyle{block} = [rectangle, draw, fill=blue!20, 
\tikzstyle{line} = [draw, -latex']
\tikzstyle{cloud} = [draw, ellipse,fill=red!20, node distance=3cm,
\tikzset{main node/.style={circle,fill=blue!20,draw,minimum size=1cm,inner sep=0pt},  }
\begin{document}
\title[Discrete Schr{\"o}dinger equations on graphs]{A discrete Schr{\"o}dinger equation via optimal transport on graphs}
\author[Chow]{Shui-Nee Chow}
\address{School of Mathematics, Georgia Institute of Technology,
Atlanta, GA 30332 U.S.A.}
\email{chow@math.gatech.edu}
\author[Li]{Wuchen Li}
\email{wcli@math.ucla.edu}
\address{Mathematics department, UCLA,
Los Angeles, CA 90095 U.S.A.}

\author[Zhou]{Haomin Zhou}
\email{haomin.zhou@math.gatech.edu}

\thanks{This work is partially supported by NSF 
Awards DMS\textendash{}1419027,  DMS-1620345, 
and ONR Award N000141310408.}

\keywords{Nonlinear Schr{\"o}dinger equations; Optimal transport; Fisher information; Nelson's approach.}
\begin{abstract}
In 1966, Edward Nelson presented an interesting derivation of the Schr{\"o}dinger equation using Brownian motion. Recently, this derivation is linked to the theory of optimal transport, which shows that the Schr{\"o}dinger equation is a Hamiltonian system on the probability density manifold equipped with the Wasserstein metric. In this paper, we consider similar matters on a finite graph. By using discrete optimal transport and its corresponding Nelson's approach, we derive a discrete Schr{\"o}dinger equation on a finite graph. The proposed system is quite different from the commonly referred discretized Schr{\"o}dinger equations. It is a system of nonlinear ordinary differential equations (ODEs) with many desirable properties. Several numerical examples are presented to illustrate the properties.  
\end{abstract}
\maketitle 

\section{Introduction}
 The nonlinear Schr{\"o}dinger equation (NLS) given in the form of 
\begin{equation}\label{NLS}
hi \frac{\partial}{\partial t}\Psi(t,x)=-\frac{h^2}{2}\Delta \Psi(t,x)+\Psi(t,x) \mathbb{V}(x)+ \Psi(t,x)\int_{\mathbb{R}^d}\mathbb{W}(x,y)|\Psi(t,y)|^2dy  \ ,
\end{equation}
plays vital roles in many areas in physical sciences \cite{Bao1, SE}.
The unknown $\Psi(t,x)$ is a complex wave function for  $x\in\mathbb{R}^d$, $h>0$ is the Planck constant, and $\mathbb{V}(x)$ and $\mathbb{W}(x,y)$ are real valued functions, referred as linear and interaction potentials respectively. Although the NLS provides accurate predictions to various physical phenomena, its formulation is very different from the classical mechanics, and it cannot be easily interpreted by the Newton's law.

To bridge the difference, Edward Nelson provided an compelling approach in 1966 \cite{Nelson1}. 
He derived the NLS by means in classical mechanics in conjunction with variational principles and stochastic diffusion processes \cite{C1, L1}. To better understand his idea, we recall that the NLS \eqref{NLS} has a fluid dynamics formulation, named Madelung system \cite{Madelung}.
By introducing a change of variables,
 $\Psi(t,x)=\sqrt{ \rho(t,x)}e^{i S(t,x)/h}$, 
one can rewrite \eqref{NLS} as
\begin{equation}\label{pairs}
 \left \{
 \begin{aligned}
 &\frac{\partial  \rho}{\partial t}+\nabla \cdot ( \rho \nabla  S)=0\ ;\\
 &\frac{\partial  S}{\partial t}+\frac{1}{2}(\nabla  S)^2+\frac{h^2}{8}\frac{\delta}{\delta  \rho(x)}\mathcal{I}( \rho)+\mathbb{V}(x)+\int_{\mathbb{R}^d}\mathbb{W}(x,y)\rho(t,y)dy=0\ ,
 \end{aligned}
 \right.
 \end{equation} 
where $\rho$, $S$ are unknown real valued functions, $\frac{\delta}{\delta\rho(x)}$ is the $L^2$ first variation operator, and $\mathcal{I}( \rho)=\int_{\mathbb{R}^d}(\nabla\log \rho(x))^2 \rho(x)dx$ is the Fisher information \cite{Fisher}. 
Nelson constructed a Lagrangian in the space of probability density functions, and then used calculus of variation to derive \eqref{pairs}. We shall give a brief review on his approach in Section 3. Readers can find more details in \cite{Nelson2}. Recently, Nelson's approach is linked to the framework of optimal transport theory \cite{Eric1, vil2008}, which has been developed in the past few decades \cite{Gangbo2, bb, Gangbo, vil2008}. The theory shows that the probability density space equipped with the optimal transport distance, also known as the Wasserstein metric, becomes a Riemannian manifold, and the NLS is a Hamiltonian system on this density manifold \cite{L1, v2008}. 

In this paper, we consider similar matters in discrete spaces, such as finite graphs. There are two reasons motivating us to conduct this investigation. On one side, Nelson's derivation is based on a variational principle, which makes his approach more attractive. However, although different formulations of the NLS on graphs have been introduced in physics and mathematics \cite{SE2, SE1, degond, SE3, NLSG}, not much is known through Nelson's approach, mainly  because the theory of discrete optimal transport has not been seriously explored until the past few years \cite{chow2012, EM1, M}.  On the other side, most of the discrete formulations for the NLS, especially those defined on lattices, are obtained by discretizations of the continuous NLS. Some important properties, such as conservation of energy, or dispersion relation, or time transverse (gauge) invariant, can be lost due to the discretizations. As reported in a recent survey on numerical methods for the NLS \cite{Bao1}, none of the commonly used schemes has all those features simultaneously. We would like to examine whether Nelson's approach can provide a systematic strategy for constructing the discrete NLS on general graphs in order to retain those desirable properties. Our investigation confirms this assertion.  

We follow the settings given in \cite{li-theory, li-thesis} to derive the discrete NLS, which turns out to be a system of ordinary differential equations. The derivation utilizes the optimal transport distance and the Fokker-Planck equation on a graph. The main results are sketched here. 

Consider a weighted finite graph $G=(V, E, \omega)$, where 
 $V$ 
is the vertex set, $E$ the edge set, and $\omega_{jl} \in \omega$ the weight of edge $(j,l)\in E$ satisfying $\omega_{lj}=\omega_{jl}>0$. We assume that $G$ is undirected, contains no self loops or multiple edges. Given a linear potential $\mathbb{V}_j$ on each note $j$ and an interactive potential $\mathbb{W}_{jl}$, with $\mathbb{W}_{lj}=\mathbb{W}_{jl}$, for any two nodes $(j,l)\in E$. Nelson's approach leads to the following ODEs: \begin{equation}\label{GNLS}
 \left \{
 \begin{aligned}
 &\frac{d\rho_j}{dt}+\sum_{l\in N(j)}\omega_{jl}( S_j- S_l)g_{jl}(\rho)=0\ ;\\
 &\frac{d S_j}{dt}+\frac{1}{2}\sum_{l\in N(j)}\omega_{jl}( S_j- S_l)^2\frac{\partial g_{jl}}{\partial \rho_j}+\frac{h^2}{8}\frac{\partial }{\partial \rho_j}\mathcal{I}(\rho)+\mathbb{V}_j+\sum_{l=1}^n \mathbb{W}_{jl}\rho_l=0\ ,
 \end{aligned}
 \right.
 \end{equation}
where $\rho_j(t)$ and $S_j(t)$ are the probability density and potential function at time $t$ on node $j$ respectively, $N(j)=\{l\in V\colon (j,l)\in E\}$ is the adjacency set of node $j$, $g_{jl}(\rho)=\frac{\rho_j+\rho_l}{2}$ represents the weight of probability density on the edge $(j,l)\in E$, and
\begin{equation*}
 \mathcal{I}(\rho):=\frac{1}{2}\sum_{(j,l)\in E}\omega_{jl}(\log\rho_j-\log\rho_l)^2g_{jl}(\rho)\ 
 \end{equation*}
 denotes the discrete Fisher information. Using $\rho_j(t)$ and $S_j(t)$, we reconstruct a complex wave equation, for the discrete NLS, on the graph. The derived discrete NLS is very different from the commonly seen ones, with the most notable distinction being a nonlinear graph Laplacian, which has not been reported before.

We shall prove that the initial value ODE \eqref{GNLS} is well defined with several favorable properties. For example, it is a Hamiltonian system that conserves the total mass and total energy. It is time reversible and gauge invariant. Its stationary solution is related to the discrete ground state, whose formulation has many desirable properties similar to those in the continuous space. In addition, a Hamiltonian matrix is introduced to study the stability of ground states. This Hamiltonian matrix is a symplectic decomposition of two nonlinear graph Laplacian matrices. One is from discrete optimal transport geometry, and the other is induced by the Hessian matrix of discrete Fisher information.
  
Our paper is arranged as follows. In Section 2, we discuss the necessity of using nonlinear Laplacian on graphs. In Section 3, we briefly review Nelson's approach, and then followed by the derivation of \eqref{GNLS} in Section 4. We show several interesting dynamical properties of \eqref{GNLS} in Sections 5 and 6. Several numerical examples are provided in Section 7. 

\section{Why nonlinear Laplacian on graphs?}
To answer the question, we consider the classical linear  Schr{\"o}dinger equation without potentials, 
\begin{equation} \label{lsq}
i\frac{\partial }{\partial t}\Psi(t,x)=-\frac{1}{2}\Delta \Psi(t,x)\ ,\quad x\in \mathbb{R}^d.
\end{equation}
It is well known that \eqref{lsq} admits plane wave solutions given in the form of $$\Psi(x,t)=Ae^{i(k\cdot x-\mu t)}\ ,$$
as long as the time frequency $\mu$ and the spatial wave number $k$ satisfying the so called {\em dispersion relation}: 
$$\mu=\frac{|k|^2}{2}\ .$$

Such a simple property may become problematic on graphs. To illstruate the challenges we face, let us consider the regular lattice in $\mathbb{R}^d$ for its simplicity, or even periodic lattices if one wishes to avoid dealing with the boundaries. In the lattice, we assume that every node has the same number of adjacent nodes. The weight on each edge is uniformly given by $\Delta x$, and the coordinate value for node $j$ is $x_j = j \Delta x$. 

On the lattice, any linear spatial discretization of \eqref{lsq} can be expressed as
\begin{equation} \label{llap}
i\frac{d\Psi_j}{dt}=-\frac{1}{2}\sum_{l\in N(j)}C_{jl}\Psi_l\ ,
\end{equation}
where $\{C_{jl}\}'s$ are selected, not all zeros, constants used to approximate the Laplace operator in \eqref{lsq}.
Assume that the discrete plane wave $\Psi_j(t)=Ae^{i(k \cdot x_j-\mu t)}$ satisfies \eqref{llap}, we must have 
$$ 
\mu e^{i(k \cdot j\Delta x -\mu t)   }=-\frac{1}{2}\sum_{l\in N(j) }C_{jl} e^{i(k \cdot l \Delta x-\mu t)}\ ,
$$
which is equivalent to 
$$\mu =-\frac{1}{2}\sum_{l \in N(j)}C_{jl} e^{i k \cdot (l-j)\Delta x}\ .$$

If $\mu$ and $k$ satisfy the dispersion relation $\mu = |k|^2/2$, one gets 
$$
|k|^2/2 = -\frac{1}{2}\sum_{l \in N(j) }C_{jl} e^{i k \cdot (l-j)\Delta x}\ .
$$
The left hand side is a quadratic function on the wave number $k$, while the right hand side is a periodic function consisting of a finite terms of trignometric polynomials. This implies that there are at most a finite number of values for $k$ satisfying this relation. Therefore only a finite number of 
pairs $(\mu, k)$ can form the plane wave solutions for \eqref{llap}. In contrast, 
any dispersion relation satisfying pair $(\mu, k)$ gives a plane wave solution for \eqref{lsq}, and there are infinitely many of them. We summarize this observation in the following theorem.

\begin{theorem}\label{th1}
For any linear spatial discretization of \eqref{lsq}, there are at most a finite number of pairs $(\mu, k)$ satisfying $\mu = |k|^2/2$ that can form its discrete plane wave solutions. 
\end{theorem}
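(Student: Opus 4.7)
The heuristic argument preceding the statement essentially gives the proof; my plan is to make it rigorous in three steps. First I would substitute the discrete plane-wave ansatz $\Psi_j(t)=A e^{i(k\cdot x_j-\mu t)}$ into the general linear discretization \eqref{llap}, cancel the common factor $e^{i(k\cdot x_j-\mu t)}$, and use the translation invariance of the regular lattice (every node has the same neighborhood and the same coefficients) to reduce to the single algebraic relation
\begin{equation*}
\mu=-\frac{1}{2}\sum_{l\in N(j)}C_{jl}\,e^{i k\cdot(l-j)\Delta x}.
\end{equation*}
Imposing the continuous dispersion relation $\mu=|k|^2/2$ then recasts the theorem as a statement about the real zero set of
\begin{equation*}
F(k):=\frac{|k|^2}{2}+\frac{1}{2}\sum_{l\in N(j)}C_{jl}\,e^{i k\cdot(l-j)\Delta x}.
\end{equation*}

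Next I would confine the admissible wave numbers. Since each exponential has unit modulus, the finite sum on the right of $F(k)=0$ is uniformly bounded in absolute value by $M:=\sum_{l\in N(j)}|C_{jl}|$, whereas $|k|^2/2$ grows quadratically in $|k|$. Any real solution of $F(k)=0$ must therefore satisfy $|k|\le\sqrt{2M}$, which forces $k$ into a fixed compact ball independent of $\mu$; the corresponding $\mu=|k|^2/2$ then automatically lies in a bounded interval. We are reduced to counting real zeros of $F$ inside a compact set.

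The third and most delicate step is to promote ``bounded'' to ``finite.'' The function $F$ extends to an entire holomorphic function on $\mathbb{C}^d$, and it is not identically zero because its real part $|k|^2/2+\frac{1}{2}\sum_l C_{jl}\cos(k\cdot(l-j)\Delta x)$ tends to $+\infty$ along every real ray. I would then split $F=0$ into its real and imaginary parts, obtaining two nontrivial real-analytic equations (the second being genuinely nontrivial under the hypothesis that not all $C_{jl}$ vanish), and argue that their common real zero set, intersected with our compact ball, is finite. The main obstacle lies precisely here: in one spatial dimension it is immediate, since a nonzero real-analytic function on a compact interval has only isolated zeros; in higher dimension one has to argue more carefully, either via the Lojasiewicz structure theorem for real-analytic varieties, or by exploiting the reflection symmetries that any sensible Laplacian discretization possesses (which kill the imaginary part identically and leave a genuinely overdetermined real system after one accounts for the quadratic growth). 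Once finiteness is established for $k$, each admissible wave number determines its $\mu$ uniquely through $\mu=|k|^2/2$, and the theorem follows.
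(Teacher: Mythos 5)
Your first two steps follow the paper's own route exactly: substitute the ansatz, cancel the common phase to get $\mu=-\frac{1}{2}\sum_{l\in N(j)}C_{jl}e^{ik\cdot(l-j)\Delta x}$, and impose $\mu=|k|^2/2$. Your explicit a priori bound $|k|\le\sqrt{2M}$ with $M=\sum_{l}|C_{jl}|$ makes precise the compactness that the paper leaves implicit, and since $\mu$ is determined by $k$, the theorem is correctly reduced to the finiteness of the real zero set of $F$ in a fixed ball. Up to there you are on the same track as the paper and somewhat more careful.

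The gap is in your third step, and it is genuine; to your credit you flag it yourself, but neither of your proposed repairs closes it. For $d=1$ finiteness is immediate (a not-identically-zero real-analytic function of one variable has isolated zeros, and they lie in a compact interval), and this is exactly the content of the paper's remark that a quadratic can meet a trigonometric polynomial only finitely often. For $d\ge2$, the Lojasiewicz structure theorem does not give finiteness: it stratifies the real-analytic zero set but is entirely consistent with positive-dimensional strata, and a compact $(d-1)$-dimensional component contains infinitely many points. The symmetry argument cuts the wrong way: if $C_{j,j+m}=C_{j,j-m}$, the imaginary part of $F$ vanishes identically, leaving the \emph{single} real equation $|k|^2/2+\frac{1}{2}\sum_{l}C_{jl}\cos(k\cdot(l-j)\Delta x)=0$ in $d$ unknowns --- an underdetermined system, not an overdetermined one. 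Its zero set is generically a compact hypersurface: in $d=2$, whenever this function changes sign inside the ball $|k|\le\sqrt{2M}$, its zero set contains the boundary of a nonempty bounded open set and is therefore infinite, and one can arrange a sign change with a consistent symmetric stencil (perturb the five-point scheme by a fourth-difference term in one coordinate direction). So the finiteness assertion cannot be reached for arbitrary coefficients in $d\ge2$ by the tools you name. The paper's own justification stops at precisely the same point and is airtight only in one dimension, so you have not overlooked an argument the paper supplies; but as written your step 3 does not close, and you should either restrict to $d=1$ (where your plan is complete) or impose additional structural hypotheses on the $C_{jl}$ under which the zero set is provably zero-dimensional.
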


An implication of this theorem is that one cannot expect every pair $(\mu, k)$, $\mu = |k|^2/2$, to give a plane wave solution for a given linear spatial discretization. In fact, only a finite number of pairs, which is a measure zero set, can do that. It also suggests that a nonlinear Laplacian on graphs must be used if one wants to construct a spatial discretization scheme that allows any pair $(\mu, k)$ to form a plane wave solution. On the other hand, this theorem does not imply that for any given pair $(\mu, k)$, one cannot find a linear Laplacian on the graph and use the pair to construct a plane wave solution for \eqref{llap}. However, if a different pair is given, one may have to switch to a different linear Laplacian on the graph.

We also note that nonlinear Laplace operators have been used, and proved to be necessary in the study of 
Fokker-Planck equations on graphs \cite{chow2012}. These observations motivated us to consider Nelson's approach to systematically construct schemes for \eqref{NLS}. 

\section{Review of Nelson's approach}
In this section, we briefly review Nelson's approach \cite{Nelson1, Nelson2}, and explain its connection with optimal transport \cite{v2008, vil2008}. To simplify the presentation, we do not consider the interactive potential 
$\mathbb{W}(x,y)$ in this section, even though such a consideration can be obtained in a straightforward manner.

Consider the following stochastic variational problem:
\begin{equation}\label{1}
\inf_{b} \{\int_0^1\mathbb{E}[\frac{1}{2}\dot X_t ^2-\mathbb{V}(X_t)] dt~:~\dot X_t=b(t,X_t)+\sqrt{h}\dot B_t\ ,~X(0)\sim  \rho^0\ ,~X(1)\sim  \rho^1\}\ ,
\end{equation}
where $b(t,x)\in\mathbb{R}^d$ can be any smooth vector field, $X_t$ is a stochastic process with prescribed probability densities $\rho^0$ and $\rho^1$ at time $0$ and $1$ respectively, $h>0$ represents the noise level, $B_t$ is a standard Brownian motion in $\mathbb{R}^d$ and $\mathbb{E}$ the expectation operator. Under suitable conditions given in \cite{Nelson1}, Nelson showed that \eqref{1} is equivalent to   
\begin{equation} \label{1.4}
\begin{split}
\inf_{b} \{\int_0^1\mathbb{E}[\frac{1}{2}(b(t, X_t)^2+h\nabla\cdot b(t, X_t))-\mathbb{V}(X_t)] dt~:~\dot X_t=b(t,X_t)+\sqrt{h}\dot B_t\ ,\\ ~X(0)\sim  \rho^0\ ,~X(1)\sim  \rho^1\}\ .
\end{split}
\end{equation}

By using the probability density function $\rho(t,x)$ defined as  
\begin{equation*}
\int_{A} \rho(t,x)dx=\textrm{Pr}(X_t\in A)\ , \quad \textrm{for any measurable set $A$\ ,}
\end{equation*}
problem \eqref{1.4} is transferred into a deterministic variational problem,
\begin{equation}\label{1.5}
\inf_{b} \int_0^1\int_{\mathbb{R}^d}[\frac{1}{2}(b^2 \rho+h\nabla\cdot b)\rho-\mathbb{V}(x) \rho ]dxdt\ ,
\end{equation}
in which the density function $\rho(t,x)$ evolves according to 
the Fokker-Planck equation
\begin{equation*}
\frac{\partial \rho}{\partial t}+\nabla \cdot ( b \rho)=\frac{h}{2}\Delta  \rho\ , \quad\rho(0,\cdot)=\rho^0(\cdot)\ , \quad \rho(1,\cdot)=\rho^1(\cdot)\ .
\end{equation*}

Noticing the facts $\nabla  \rho= \rho \nabla \log \rho\ $,
$\int_{\mathbb{R}^d}\nabla \cdot b\rho dx=-\int_{\mathbb{R}^d}b\nabla\rho dx=\int_{\mathbb{R}^d}b\cdot \rho \nabla\log\rho dx$,
and $\Delta\rho=\nabla\cdot(\nabla\rho)=\nabla\cdot(\rho\nabla\log\rho)$, then \eqref{1.5} can be rewritten into the following optimal control problem
\begin{equation}\label{2}
\inf_{b} \int_0^1 \int_{\mathbb{R}^d}[\frac{1}{2}(b^2- h b \cdot\nabla\log \rho) \rho-\mathbb{V}(x) \rho] dx dt\ ,\end{equation}
where
\begin{equation*}
\frac{\partial \rho}{\partial t}+\nabla \cdot ( \rho (b-\frac{h}{2}\nabla \log \rho))=0\ ,\quad \rho(0,\cdot)=\rho^0(\cdot)\ ,\quad \rho(1,\cdot)=\rho^1(\cdot)\ .
\end{equation*}

The {key} of Nelson's derivation is based on a {\em change of variable} given by,   
\begin{equation}\label{Nelson}
v(t,x):=b(t,x)-\frac{h}{2}\nabla \log \rho(t,x)\ .
\end{equation}
Substituting $v$ into \eqref{2}, the problem becomes
\begin{equation}\label{3}
\inf_{v}\int_0^1\{\int_{\mathbb{R}^d}\frac{1}{2}v^2 \rho dx-\frac{h^2}{8}\mathcal{I}(\rho)-\mathcal{V}(\rho) \} dt\ ,
\end{equation}
such that
\begin{equation*}
\frac{\partial  \rho}{\partial t}+\nabla \cdot ( \rho v)=0\ ,\quad \rho(0,\cdot)=\rho^0(\cdot)\ ,\quad \rho(1,\cdot)=\rho^1(\cdot)\ .
\end{equation*}
Here $\mathcal{V}(\rho):=\int_{\mathbb{R}^d}\mathbb{V}(x)\rho(x)dx$ is the linear potential energy and $\mathcal{I}(\rho):=\int_{\mathbb{R}^d}(\nabla\log\rho)^2\rho dx$ is the Fisher information. The integrant in \eqref{3} is the
Lagrangian for the optimal control problem. 

The critical point (in the sense of Guerra-Morato \cite{Nelson1}) of \eqref{3} satisfies the Madelung equations \begin{equation}\label{6}
\left \{\begin{aligned}
&\frac{\partial  \rho}{\partial t}+\nabla \cdot ( \rho \nabla  S)=0\ ;\\
&\frac{\partial  S}{\partial t}+\frac{1}{2}(\nabla  S)^2+\frac{\delta}{\delta\rho(x)}\{\frac{h^2}{8}\mathcal{I}( \rho)+\mathcal{V}(\rho)\}=0\ ,
\end{aligned}
\right.
\end{equation}
where $v(t,x)=\nabla S(t,x)$ is the optimal volecity field. The first equation in \eqref{6} is known as the continuity equation, while the second one is called Hamilton-Jacobi equation in the literature. 

Introducing a complex wave function
\begin{equation*}
\Psi(t,x)=\sqrt{ \rho(t,x)}e^{\frac{i S(t,x)}{h}}\ ,
\end{equation*}
then $\Psi(t,x)$ satisfies the linear Schr{\"o}dinger equation \begin{equation*}
hi\frac{\partial}{\partial t}\Psi=-\frac{h^2}{2}\Delta \Psi+\Psi \mathbb{V}(x)\ .
\end{equation*}

Compared to the optimal transport distance, which can be defined by the well known Benamou-Brenier formula \cite{bb},
\begin{equation*}
\inf_{v}\{\int_0^1\int_{\mathbb{R}^d}v^2 \rho dxdt~ :~\frac{\partial \rho}{\partial t}+\nabla\cdot (\rho v)=0\ , \quad \textrm{$\rho(0,\cdot) = \rho^0(\cdot)\ , \quad \rho(1,\cdot) = \rho^1(\cdot)$}\}\ .
\end{equation*}
Nelson's approach can be viewed as a modified optimal transport problem in which the negative of a potential energy and the Fisher information are amended to create the Lagrangian in formulation \eqref{3}.

\section{A Schr{\"o}dinger equation on a finite graph}
Following Nelson's approach, we shall derive the discrete NLS \eqref{GNLS} on a graph via discrete optimal transport. 


To do so, we first review some basics of the discrete optimal transport theory developed in recent years. Consider a finite graph $G=(V, E, \omega)$. The probability set (simplex) supported on all vertices of $G$  is defined by 
\begin{equation*}
\mathcal{P}(G)=\{(\rho_j)_{j=1}^n\mid \sum_{j=1}^n\rho_j=1\ ,\quad \rho_j\geq 0\ , \quad \textrm{for any $j\in V$}\}\ ,
\end{equation*}
where $\rho_j$ is the discrete probability function at node $j$.  The interior of $\mathcal{P}(G)$ is denoted by
 $\mathcal{P}_o(G)$. 
 
Following \cite{li-theory}, we introduce some notations and operators on $G$ and $\mathcal{P}(G)$. A {\em vector field} $v$ on $G$ refers to a  
{\em skew-symmetric matrix} on the edges set $E$:  
\begin{equation*}
v:=(v_{jl})_{(j,l)\in E}\ ,\quad\textrm{with} \quad v_{lj}=-v_{jl}\ .
\end{equation*}
Given a function $S=(S_j)_{j=1}^n$ on $V$, it induces a {\em potential vector field} $\nabla_G S$ on $G$ as 
\begin{equation*}
\nabla_GS:=(\sqrt{\omega_{jl}}(S_j-S_l))_{(j,l)\in E}\ .
\end{equation*} 
For a probability function $\rho \in \mathcal{P}(G)$ and a vector field $v$, define 
the product $\rho v$, called {\em flux function} on $G$, by
\begin{equation*}
\rho v:=(v_{jl} g_{jl}(\rho))_{(j,l)\in E}\ ,
\end{equation*}
where $g_{jl}(\rho)$ is a chosen function on the edge 
 \begin{equation*}
 g_{jl}(\rho)=\frac{\rho_j+\rho_l}{2}\ ,\quad \textrm{for any $(j,l)\in E$}\ .
\end{equation*} 
We remark that $g_{jl}$ may have other choices, such as the logarithmic mean used in \cite{EM1}. The {\em divergence} of flux function $\rho v$ on $G$ is defined by
\begin{equation*}
\textrm{div}_G(\rho v):=-\biggl(\sum_{l\in N(j)}\sqrt{\omega_{jl}}v_{jl}g_{jl}(\rho)\biggr)_{j=1}^n\ .
\end{equation*}
Given two vector fields $v=(v_{jl})_{(j,l)\in E}$, $u=(u_{jl})_{(j,l)\in E}$ on a graph and $\rho \in \mathcal{P}(G)$,
the discrete {\em inner product} is defined by,
\begin{equation*}
(v, u)_ {\rho}:=\frac{1}{2}\sum_{(j,l)\in E} v_{jl}u_{jl}g_{jl}(\rho)\ , 
\end{equation*}
where the coefficient $1/2$ accounts for the fact that every edge in $G$ is counted twice, i.e. $(j,l), (l,j)\in E$. 

Using the notations, the Wasserstein metric on the graph can be defined by the discrete Benamou-Brenier formula \cite{bb},
\begin{definition}\label{W2}
 For any $ \rho^0$, $ \rho^1\in \mathcal{P}_o(G)$, define a metric
\begin{equation*}\label{metric}
{W}( \rho^0, \rho^1):=\inf_{v}~\{\left(\int_0^1(v, v)_ \rho dt\right)^{\frac{1}{2}}~:~
\frac{d \rho}{dt}+\textrm{div}_G(\rho v)=0\ ,\quad \rho(0)= \rho^0\ ,\quad  \rho(1)= \rho^1\}\ ,
\end{equation*}
where the infimum is taken over all vector fields $v$ on a graph, and $ \rho$ is a continuous differentiable curve
$ \rho:[0,1]\rightarrow \mathcal{P}_o(G)$. 
\end{definition}
$\mathcal{P}_o(G)$ equipped with the metric ${W}$ is a Riemannian manifold \cite{chow2012,li-theory}. 

\subsection{Nelson's approach on a finite graph}
Now, we are ready to derive the NLS on graph \eqref{GNLS} via discrete optimal transport. 

In the discrete case, the linear and interaction potentials refer to 
$$\mathcal{V}(\rho)=\sum_{j=1}^n \mathbb{V}_j\rho_j\ ,\quad \mathcal{W}(\rho)=\frac{1}{2}\sum_{l=1}^n\sum_{j=1}^n \mathbb{W}_{lj}\rho_l\rho_j\ , $$
respectively. We start with a discrete analog of Nelson's problem presented in \eqref{1.5},
\begin{equation}\label{2_new}
\inf_{b} \int_0^1 \frac{1}{2}[(b, b)_ {\rho}-h(b, \nabla_G\log \rho)_ {\rho}]-\mathcal{V}( \rho)-\mathcal{W}( \rho)dt\ ,
\end{equation}
where the infimum is taken over all discrete vector fields $b$, $\rho(t)$ satisfies the discrete Fokker-Planck equation \cite{chow2012, li-theory}: 
\begin{equation*}
\frac{d \rho}{dt}+\textrm{div}_G( \rho(b-\frac{h}{2}\nabla_G\log \rho))=0\ ,
\end{equation*}
and $ \rho(0) = \rho^0$, $ \rho(1) = \rho^1$ are given in $\mathcal{P}_o(G)$. Similar to Nelson's change of variable, we define a new vector field $v=(v_{lj})_{(l,j)\in E}$ on the graph
\begin{equation*}
v:=b-\frac{h}{2}\nabla_G \log \rho\ .
\end{equation*}
Substituting $v$ into \eqref{2_new}, we can write the objective functional as 
 \begin{equation}\label{2_Nelson}
 J(v):=\int_0^1\frac{1}{2}(v,v)_ {\rho}-{\frac{h^2}{8}\mathcal{I}(\rho)}-\mathcal{V}( \rho)-\mathcal{W}( \rho) dt\ ,
\end{equation}
where $v$ is a vector field on the graph, such that $\rho(t)\in \mathcal{P}_o(G)$ satisfies
\begin{equation*}
\frac{d \rho}{d t}+\textrm{div}_G ( \rho v)=0\ ,\quad \rho(0)=\rho^0\ ,\quad \rho(1)=\rho^1\ .
\end{equation*}
We note that any given feasible path $\rho(t)$ of \eqref{2_Nelson} is determined by $v(t)$, so we denote functional $J$ only in term of $v$, and call it {discrete Nelson's approach}. 

We also call $(v(t), \rho(t))$ a {\bf critical point} of \eqref{2_Nelson}  if \begin{equation}\label{critical}
\mathcal{J}(v+\delta v)-\mathcal{J}(v)=o(\delta v)\ ,\quad 
\forall \delta v(t) \in \mathcal{D},
\end{equation}
where 
\begin{equation*}
\begin{split}
\mathcal{D}=\{ \delta v\in C^{\infty}[0,1]~:~
&~\bar \rho(t) \in\mathcal{P}_o(G)~\textrm{is continuously differentiable, and}\ , \\
&\frac{d \bar \rho}{dt}+\textrm{div}_G( \bar \rho(v+\delta v))=0\ ,~\bar \rho(0)=\rho^0\ ,~\bar \rho(1)=\rho^1\}\ .
\end{split}
\end{equation*}
In the following theorem, we show that the critical point of the discrete Nelson's approach satisfies \eqref{GNLS}.
\begin{theorem}[Critical point of Nelson's approach]\label{derivation}
Assume there exists a critical point $(v(t), \rho(t))$ of \eqref{2_Nelson} in the sense of \eqref{critical}, which are smooth functions with respect to the time variable. Then $v(t)$ and $\rho(t)$ satisfy the following conditions:
 \begin{itemize}
\item[(a)] $v(t)$ is a potential vector field on the graph, i.e. there exists a function $S(t) = ( S_j(t))_{j=1}^n$ defined on the nodes, such that 
 \begin{equation*}
 v_{jl}(t)= \sqrt{\omega_{jl}}(S_j(t)- S_l(t))\ ,\quad \textrm{for all $t\in[0, 1]$ and $(j,l) \in E$}\ .
 \end{equation*} 
\item[(b)] For every $S(t)$ that induces $v(t)$, there exists a scalar function $C(t)\in \mathbb{R}$, independent of 
the nodes, such that $\rho(t)$ and $\bar S(t)$, defined by $\bar S(t) =(S_j(t)-C(t))_{j=1}^n$, satisfy \eqref{GNLS}:
\begin{equation*}
 \left \{
 \begin{aligned}
 &\frac{d\rho_j}{dt}+\sum_{l\in N(j)}\omega_{jl}( \bar S_j- \bar S_l)g_{jl}(\rho)=0\ ;\\
 &\frac{d \bar S_j}{dt}+\frac{1}{2}\sum_{l\in N(j)}\omega_{jl}( \bar S_j- \bar S_l)^2\frac{\partial g_{jl}}{\partial \rho_j}+\frac{h^2}{8}\frac{\partial }{\partial \rho_j}\mathcal{I}(\rho)+\mathbb{V}_j+\sum_{l=1}^n \mathbb{W}_{jl}\rho_l=0\ ,
 \end{aligned}
 \right.
 \end{equation*}

\end{itemize}
\end{theorem}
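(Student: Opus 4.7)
The plan is to treat \eqref{2_Nelson} as a variational problem constrained by the continuity equation and to introduce a Lagrange multiplier $S(t)=(S_j(t))_{j=1}^n$ for that constraint. The admissible perturbations in $\mathcal{D}$ implicitly force the induced $\delta\rho$ to vanish at $t=0,1$, and the multiplier is the standard device for decoupling $\delta v$ and $\delta\rho$ so that the two Euler--Lagrange conditions may be read off from free variations of $v$ and $\rho$ separately. I would first integrate by parts in time (whose boundary contribution $[\sum_j S_j\rho_j]_0^1$ is harmless since $\rho(0),\rho(1)$ are prescribed) and apply the summation-by-parts identity $\sum_j S_j(\textrm{div}_G(\rho v))_j = -(\nabla_G S, v)_\rho$, which follows from reindexing over ordered edges together with the skew-symmetry $v_{lj}=-v_{jl}$ and the symmetries $\omega_{lj}=\omega_{jl}$, $g_{lj}=g_{jl}$.

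Varying $v$ by an arbitrary skew-symmetric $\delta v$ yields the critical condition $(v-\nabla_G S,\delta v)_\rho=0$. Because $g_{jl}(\rho)>0$ for $\rho\in\mathcal{P}_o(G)$, the bilinear form $(\cdot,\cdot)_\rho$ is nondegenerate on skew-symmetric tensors on $E$, so $v=\nabla_G S$, i.e.\ $v_{jl}=\sqrt{\omega_{jl}}(S_j-S_l)$. This is statement (a), with the multiplier $S$ itself furnishing the potential function.

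Varying $\rho$ subject to $\delta\rho(0)=\delta\rho(1)=0$ gives an Euler--Lagrange equation involving $\dot S_j$. The only nontrivial computation is to differentiate the two $\rho$-dependent edge-weighted quantities $\tfrac{1}{2}(v,v)_\rho$ and $(\nabla_G S,v)_\rho$ with respect to $\rho_j$; each is an edge sum weighted by $g_{jl}(\rho)$, so the derivative factors through $\partial g_{jl}/\partial\rho_j$ summed over $l\in N(j)$. Substituting $v_{jl}=\sqrt{\omega_{jl}}(S_j-S_l)$ from the previous step, the two pieces combine (with the relative sign coming from the multiplier term in the augmented Lagrangian) into the compact coefficient $\tfrac{1}{2}\sum_{l\in N(j)}\omega_{jl}(S_j-S_l)^2\,\partial g_{jl}/\partial\rho_j$ appearing in \eqref{GNLS}, while the contributions from $\mathcal{I},\mathcal{V},\mathcal{W}$ and the $\dot S_j$ arising from the time integration by parts yield the remaining terms of the Hamilton--Jacobi equation for this specific $S$.

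The gauge clause in (b) is then a routine observation: under the standing assumption that $G$ is connected, the kernel of $\nabla_G$ on node functions consists only of the spatially constant functions, so any two representatives $S,S'$ with $\nabla_G S=\nabla_G S'=v$ satisfy $S_j-S'_j=C(t)$ for some scalar function $C$ of time alone. Consequently, given an arbitrary preassigned representative $S$, setting $C(t)$ equal to the difference between $S$ and the specific representative produced by the multiplier yields $\bar S_j:=S_j-C(t)$ solving \eqref{GNLS}. The main technical obstacle I foresee is the sign-and-edge-counting bookkeeping in the $\rho$-variation: the $\partial_{\rho_j}$ contributions from $\tfrac{1}{2}(v,v)_\rho$ and from $(\nabla_G S,v)_\rho$ must consolidate into a coefficient of precisely the size appearing in \eqref{GNLS}, rather than doubling up or cancelling, which requires careful tracking of the factors of $\tfrac{1}{2}$ between ordered and unordered edges.
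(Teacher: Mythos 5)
Your formal computation is right---the signs and the factor $\tfrac12$ in the Hamilton--Jacobi coefficient do come out as you describe, and the summation-by-parts identity $\sum_j S_j(\textrm{div}_G(\rho v))|_j=-(\nabla_G S,v)_\rho$ is exactly the one the paper uses. The gap is at the foundation: the notion of critical point \eqref{critical} only tests $\mathcal{J}$ against perturbations $\delta v\in\mathcal{D}$, i.e.\ those for which the perturbed continuity equation still connects $\rho^0$ to $\rho^1$. An arbitrary skew-symmetric $\delta v$ is generically \emph{not} in $\mathcal{D}$ (it drives $\bar\rho(1)$ off $\rho^1$), so the step ``varying $v$ by an arbitrary skew-symmetric $\delta v$ yields $(v-\nabla_G S,\delta v)_\rho=0$'' is not licensed by the hypothesis. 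Invoking ``the standard Lagrange multiplier device'' begs the question here, because the existence of a node function $S$ playing the role of the multiplier is essentially the content of part (a); it has to be produced, not postulated. The paper produces it via the Hodge-type decomposition of Lemma \ref{lemma}: for each $t$ one writes $v=\nabla_G S+u$ with $\textrm{div}_G(\rho u)=0$ unconditionally, observes that perturbations $\epsilon w$ with $\textrm{div}_G(\rho w)=0$ leave $\rho(t)$ entirely unchanged and hence \emph{are} admissible, and then criticality tested against $w=u$ forces $\int_0^1\sum_{(j,l)\in E}u_{jl}^2g_{jl}(\rho)\,dt=0$, i.e.\ $u\equiv 0$. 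If you want to keep the multiplier route you must prove a constraint-qualification (controllability of the linearized continuity equation on $\mathcal{P}_o(G)$); the decomposition argument avoids this entirely.

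For part (b) the same issue recurs in milder form: you cannot vary $\rho$ freely either, only along $\delta\rho$ with $\sum_j\delta\rho_j=0$, $\delta\rho(0)=\delta\rho(1)=0$, that are realizable by an admissible velocity perturbation. The paper realizes such variations explicitly by setting $S^\epsilon=L(\rho^\epsilon)^{-1}\frac{d\rho^\epsilon}{dt}$ with the pseudo-inverse of the weighted graph Laplacian, which guarantees $\nabla_GS^\epsilon-\nabla_GS\in\mathcal{D}$ and keeps everything inside $\mathcal{P}_o(G)$ for small $\epsilon$. The restriction $\sum_j\delta\rho_j=0$ is also where the function $C(t)$ really comes from: the Euler--Lagrange identity is only forced up to a node-independent $c(t)$, and $C(t)=\int_0^tc(s)\,ds$ absorbs it. Your attribution of $C(t)$ solely to the constant ambiguity in recovering $S$ from $v=\nabla_GS$ on a connected graph misses this second, independent source of the time-dependent shift.
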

\begin{remark}
We can rewrite \eqref{GNLS} as
\begin{equation}\label{g_ham}
  \frac{d}{d t}\begin{pmatrix}
  \rho\\ S
 \end{pmatrix}=\mathbb{J} \begin{pmatrix}\frac{\partial}{\partial  \rho}\mathcal{H}\\ \frac{\partial}{\partial  S}\mathcal{H}\end{pmatrix}\ ,
  \end{equation}
 where $\mathcal{H}$ is the discrete total energy
\begin{equation*}
\mathcal{H}(\rho, S):=\frac{1}{2}(\nabla_G S, \nabla_G S)_\rho+\frac{h^2}{8}\mathcal{I}(\rho)+\mathcal{V}(\rho)+\mathcal{W}(\rho)\ ,
\end{equation*}
 and  
 $$\mathbb{J}=\begin{pmatrix}
 0&\mathbb{I}\\
-\mathbb{I}& 0
\end{pmatrix}$$ is a symplectic matrix, with $\mathbb{I}\in \mathbb{R}^{n\times n}$ being the identity matrix.
The symplectic form \eqref{g_ham}, identical to \eqref{GNLS},  
is the discrete analog of \eqref{6}. Following the convention, we call the first equation the discrete 
continuity equation, and the second one the discrete Hamilton-Jacobi equation.
\end{remark}

The proof of Theorem 3 requires the following lemma, which can be viewed as the Hodge decomposition on a graph.
\begin{lemma}\label{lemma}
Given a vector field $v=(v_{jl})_{(j,l)\in E}$ on a graph  and a probability density function $ \rho\in\mathcal{P}_o(G)$, there exists a unique $\nabla_GS$, such that 
\begin{equation}\label{a}
v=\nabla_GS+u\ , \quad\textrm{with}\quad \textrm{div}_G( \rho u)=0\ . \end{equation}
\end{lemma}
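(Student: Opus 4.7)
The plan is to apply the weighted divergence $\textrm{div}_G(\rho \cdot)$ to both sides of \eqref{a}, which reduces the existence of the decomposition to solving a single Poisson-type equation on the nodes. Since any admissible $u$ satisfies $\textrm{div}_G(\rho u)=0$, the function $S$ must solve
\begin{equation*}
-\textrm{div}_G(\rho \nabla_G S) = -\textrm{div}_G(\rho v),
\end{equation*}
and conversely, once such an $S$ is produced, the vector field $u:=v-\nabla_G S$ has zero weighted divergence by construction. The lemma therefore reduces to an invertibility analysis, on the appropriate quotient, of the $\rho$-weighted graph Laplacian
\begin{equation*}
L_\rho S := -\textrm{div}_G(\rho \nabla_G S).
\end{equation*}

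Next I would record the structural properties of $L_\rho$. Using the definitions of $\nabla_G$ and $\textrm{div}_G$ from the paper and summing by parts while exploiting the fact that each unordered edge is counted twice in the ordered edge set, a short calculation yields the summation-by-parts identity
\begin{equation*}
\sum_{j=1}^n (L_\rho S)_j T_j = (\nabla_G S,\nabla_G T)_\rho = \frac{1}{2}\sum_{(j,l)\in E}\omega_{jl}(S_j-S_l)(T_j-T_l)g_{jl}(\rho).
\end{equation*}
Hence $L_\rho$ is symmetric and positive semidefinite for the Euclidean inner product on $\mathbb{R}^n$. Because $\rho\in\mathcal{P}_o(G)$, every coefficient $\omega_{jl}g_{jl}(\rho)$ is strictly positive, so the quadratic form vanishes precisely when $S_j=S_l$ on every edge, i.e.\ when $S$ is locally constant. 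Assuming (as is implicit throughout the paper) that $G$ is connected, $\ker L_\rho = \mathbb{R}\mathbf{1}$ is one-dimensional.

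Existence of $S$ then follows from the Fredholm alternative: the equation $L_\rho S = -\textrm{div}_G(\rho v)$ is solvable iff the right-hand side is orthogonal to $\ker L_\rho$, which is the single scalar condition $\sum_{j=1}^n \textrm{div}_G(\rho v)_j = 0$. Writing this sum as $-\sum_{(j,l)\in E}\sqrt{\omega_{jl}}\,v_{jl}\,g_{jl}(\rho)$ and applying the skew-symmetry $v_{lj}=-v_{jl}$ together with the symmetry of $\omega_{jl}$ and $g_{jl}(\rho)$, the two ordered copies of each unordered edge cancel, so the sum vanishes automatically. Thus $S$ exists and is unique up to an additive constant, and consequently $\nabla_G S$ — which depends only on the differences $S_j-S_l$ — is uniquely determined. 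Setting $u:=v-\nabla_G S$ completes the decomposition.

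The only substantive assumption is connectivity of $G$: on a graph with several components $\ker L_\rho$ is strictly larger and the Fredholm condition would have to be imposed on each component separately. Granted connectedness and $\rho$ strictly positive, the argument is essentially the finite-dimensional, $\rho$-weighted analog of the classical Helmholtz decomposition, and there is no real analytical obstacle; the only point that needs genuine verification is the orthogonality of $\textrm{div}_G(\rho v)$ to constants, which, as noted, is automatic from the skew-symmetry of $v$.
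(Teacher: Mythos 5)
Your proposal is correct and follows essentially the same route as the paper: reduce the decomposition to the $\rho$-weighted graph Laplacian equation $L(\rho)S=\textrm{div}_G(\rho v)$, identify the kernel as the constants using connectivity and strict positivity of $g_{jl}(\rho)$, and conclude that $S$ is unique up to an additive constant so that $\nabla_G S$ is unique. Your explicit check of the Fredholm compatibility condition (that $\sum_j \textrm{div}_G(\rho v)_j=0$ follows from the skew-symmetry of $v$) is a detail the paper's sketch leaves implicit, but it is the same argument.
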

\begin{proof}
The detailed proof can be found in \cite{EM1}. For the completeness of this paper, more importantly 
to introduce some notations that will be used later in the paper, we sketch the proof here.
We define a weighted graph Laplacian matrix $L(\rho)\in \mathbb{R}^{n\times n}$:
$$ L(\rho)=-D^T \Theta(\rho) D\ ,$$
where $D \in \mathbb{R}^{|E|\times |V|}$ is the discrete gradient matrix 
\begin{equation*} 
D_{(j,l)\in E, k\in V}=\begin{cases}
\sqrt{\omega_{jl}} & \textrm{if $j=k\ ;$}\\ 
-\sqrt{\omega_{jl}} & \textrm{if $l=k\ ;$}\\
0 & \textrm{otherwise}\ ;
\end{cases}
\end{equation*}
the transpose of $D$, denoted by $D^T$, is the discrete divergence matrix and
$\Theta\in \mathbb{R}^{|E|\times |E|}$ is the diagonal weighted matrix
\begin{equation*}
\Theta_{(j,l)\in E, (j',l')\in E}=\begin{cases}
g_{jl}(\rho) & \textrm{if $(j,l)=(j',l')\in E\ ;$}\\ 
0 & \textrm{otherwise}\ .
\end{cases}
\end{equation*}
We emphasize that $L(\rho)$ depends only on $\rho$. This is different from the commonly seen graph Laplace operator. 

We only need to show that there exists a unique gradient vector field $\nabla_G S$, such that
$$\textrm{div}_G(\rho \nabla_G S)=L(\rho) S=\textrm{div}_G(\rho v)\ .$$
Since $\rho\in \mathcal{P}_o(G)$ and the graph is connected, then $$ S^TL(\rho)S=\frac{1}{2}\sum_{(j,l)\in E}\omega_{jl}(S_j-S_l)^2g_{jl}(\rho)=0\ ,$$
this implies that value $0$ must be a simple eigenvalue of the weighted graph Laplacian matrix $L(\rho)$  with eigenvector $\{1,\cdots ,1\}$. Thus there exists a unique solution of $S$ up to constant shrift. Therefore $\nabla_G S$ is unique.
\end{proof}
Furthermore, we can express 
$$L(\rho)=U\begin{pmatrix}
0 & & &\\
& \lambda_{sec}(L(\rho))& &\\
& & \ddots & \\
& & & {\lambda_{\max}(L(\rho))}
\end{pmatrix}U^{-1} \ ,$$
where $0<\lambda_{sec}(L(\rho))\leq\cdots\leq \lambda_{\max}(L(\rho))$ are $n$ eigenvalues of $L(\rho)$ arranged in the ascending order, and $U$ is a matrix whose columns are eigenvectors of $L(\rho)$. 
The pseudo-inverse of $L(\rho)$ is defined by
$$L(\rho)^{-1}=U\begin{pmatrix}
0 & & &\\
& \frac{1}{\lambda_{sec}(L(\rho))}& &\\
& & \ddots & \\
& & & \frac{1}{\lambda_{max}(L(\rho))}
\end{pmatrix}U^{-1} \ .$$
Thus $S=L(\rho)^{-1}\textrm{div}_G(\rho v)$.

\begin{proof}[Proof of Theorem \ref{derivation}]
(a) We prove that $v(t)=\nabla_G S(t)$. From Lemma \ref{lemma}, we have,
\begin{equation*}
v(t)=\nabla_G S(t)+u(t)\ ,\quad\textrm{with}\quad \textrm{div}_G( \rho(t) u(t))=0 \quad \textrm{for all $t\in [0,1]$} \ .
\end{equation*} 
We only need to prove $u(t)=0$ for $t\in[0,1]$ when $v(t)$ is a critical point. 
Consider a function $w(t)=(w_{jl}(t))_{(j,l)\in E}$ satisfying \begin{equation*}
 \textrm{div}_G( \rho w(t))=0\ , \quad \textrm{for $t\in[0, 1]$}\ .  
\end{equation*}
It is clear that $\epsilon w \in \mathcal{D}$ for any $\epsilon>0$ because $\frac{d\rho}{dt}+\textrm{div}_G(\rho (v+\epsilon w))=0$. 

Since $(v, \rho)$ is a critical solution of \eqref{2_Nelson}, we must have
 \begin{equation}\label{one}
\lim_{\epsilon \rightarrow 0}\frac{\mathcal{J}(v+\epsilon w)-\mathcal{J}(v)}{\epsilon}=0\ .
\end{equation}
Because $ \rho(t)$ keeps the same for the vector field $v+\epsilon w$, it implies
\begin{equation*}
\begin{split}
2\frac{\mathcal{J}( v+\epsilon w)-\mathcal{J}(v)}{\epsilon}=&\int_0^1 \frac{(v+\epsilon w,v+\epsilon w)_ {\rho}-(v,v)_ {\rho}}{\epsilon}dt\\
=& \int_0^1 \frac{(v, v)_ {\rho}+2\epsilon (v,w)_ {\rho}+\epsilon^2(w, w)_ {\rho}-(v, v)_ {\rho}}{\epsilon}dt\\
=& 2\int_0^1 (v, w)_ {\rho} dt+O(\epsilon)= 2\int_0^1(\nabla_G  S+u, w)_ {\rho} dt+O(\epsilon)\\
=&2\int_0^1(\nabla_G  S, w)_ {\rho}+(u, w)_ {\rho} dt+O(\epsilon)\\
=&2\int_0^1-\sum_{j=1}^n \textrm{div}_G( \rho  w)|_j S_j+(u,w)_ {\rho} dt+O(\epsilon)\\ 
=&\int_0^1\sum_{(j,l)\in E}u_{jl}(t)w_{jl}(t) g_{jl}(\rho(t)) dt+O(\epsilon)\ , \end{split}
\end{equation*}
where the last equality uses the fact $\textrm{div}_G( \rho w)=0$. From \eqref{one}, we get
\begin{equation*}
\int_0^1\sum_{(j,l)\in E}u_{jl}(t)w_{jl}(t)g_{jl}( \rho(t)) dt=0\ .
\end{equation*}
In particular, by taking $w(t)=u(t)$, we obtain
\begin{equation*}
\int_0^1\sum_{(j,l)\in E}u_{jl}(t)^2g_{jl}( \rho(t)) dt=0\ . \end{equation*}
Since $ \rho(t)\in \mathcal{P}_o(G)$, $g_{jl}(\rho(t))>0$ and $u(t)\in \mathcal{D}$, this implies $u(t)=0$ for $t\in[0,1]$, which proves (a).

(b) 
Since $\rho(t)\in \mathcal{P}_o(G)$ is continuous in $[0,1]$, then $\min_{i\in V,~t\in [0,1]}\rho_i(t)\geq c_0>0$. We consider a perturb function $ \rho^{\epsilon}(t)$ defined by:
\begin{equation*}
\rho^\epsilon(t)=\rho(t)+\epsilon \delta \rho(t)\ ,
\end{equation*}
where $\delta \rho(t)=(\delta\rho_j(t))_{j=1}^n$, $\delta\rho_j(t)\in C^\infty[0, 1]$ with $\sum_{j=1}^n\delta\rho_j=0$ and $\delta \rho(0)=\delta \rho(1)=0$. Let $\epsilon\sup_{0\leq t\leq 1}|\delta\rho(t)|<\frac{1}{2}c_0$, then $\rho^\epsilon(t)\in \mathcal{P}_o(G)$. Thus $L(\rho^{\epsilon}(t))^{-1}$ is well defined for $t\in[0,1]$, whose entries are smooth. From $S^\epsilon(t)=L(\rho^\epsilon(t))^{-1}\frac{d \rho^\epsilon}{dt}$, then $S^\epsilon(t)$ is smooth with respect to $t$ and $\epsilon$. Since $$\frac{d\rho^\epsilon}{dt}+\nabla_G(\rho^\epsilon (\nabla_GS^\epsilon-\nabla_G S+\nabla_G S))=0$$, we have $\nabla_G S^\epsilon(t)-\nabla_GS(t)\in D$. 

For the simplicity of presentation, we denote 
$$
\mathcal{F}( \rho):=\frac{h^2}{8}\mathcal{I}(\rho)+\mathcal{V}( \rho)+\mathcal{W}( \rho)\ .
$$
By direct calculations, we have 
\begin{equation}\label{s0}
\begin{split}
\frac{\mathcal{J}(\nabla_G S^\epsilon)-\mathcal{J}(\nabla_G S)}{\epsilon}
=&\int_0^1 \frac{1}{2\epsilon}[(\nabla_G  S^\epsilon, \nabla_G  S^\epsilon)_{ \rho^{\epsilon}}-(\nabla_G S, \nabla_G S)_ {\rho}]dt-\int_0^1 \frac{1}{\epsilon}[\mathcal{F}( \rho^{\epsilon})-\mathcal{F}( \rho)] dt \ . \\
=&\int_0^1 \frac{1}{2\epsilon}[(\nabla_G  S^\epsilon, \nabla_G  S^\epsilon)_{ \rho^{\epsilon}}-(\nabla_G S, \nabla_G S)_ {\rho}]dt \quad \quad (\star)\\
&-\int_0^1 \sum_{j=1}^n\delta\rho_j\frac{\partial}{\partial\rho_j}\mathcal{F}(\rho)dt+O(\epsilon) \ . \\
\end{split}
\end{equation}
We need to estimate $(\star)$. Using the Taylor expansion of $S^\epsilon(t)$ with respect to $\epsilon$, $S^\epsilon(t)=S(t)+\epsilon \delta S(t)+o(\epsilon)$, 
where $\delta S(t)=\frac{d}{d\epsilon}S^\epsilon(t)|_{\epsilon=0}$, 
we obtain
\begin{equation}\label{15}
\begin{split}
(\star)=&\int_0^1\frac{1}{2\epsilon}[(\nabla_G  S+\epsilon\nabla_G \delta S, \nabla_G  S+\epsilon\nabla_G \delta S)_{ \rho^{\epsilon}}-(\nabla_G S, \nabla_G S)_ {\rho}]dt+O(\epsilon)\\
=&\int_0^1\frac{1}{2\epsilon}[(\nabla_G  S, \nabla_G S)_{ \rho^{\epsilon}}-(\nabla_G S, \nabla_G S)_ {\rho}+2\epsilon (\nabla_G S, \nabla_G \delta S)_{ \rho^{\epsilon}} ]dt+O(\epsilon)\\
=&\int_0^1\frac{1}{4}\sum_{j=1}^n\sum_{l\in N(j)}\omega_{jl}( S_j- S_l)^2[g_{jl}( \rho +\epsilon \delta \rho)-g_{jl}(\rho)]dt+\int_0^1(\nabla_G S, \nabla_G \delta S)_{\rho}dt+O(\epsilon)\\
=&\int_0^1\frac{1}{4}\sum_{(j,l)\in E}\omega_{jl}( S_j- S_l)^2[\frac{\partial g_{jl}}{\partial \rho_j}\delta \rho_j +\frac{\partial g_{jl}}{\partial \rho_l}\delta \rho_l]dt+\int_0^1(\nabla_G S, \nabla_G \delta S)_{\rho}dt+O(\epsilon)\\
=&\int_0^1\frac{1}{2}\sum_{j=1}^n [\delta \rho_j\sum_{l\in N(j)}\omega_{jl}( S_j- S_l)^2\frac{\partial g_{jl}}{\partial \rho_j}] dt+\int_0^1(\nabla_G S, \nabla_G \delta S)_{\rho}dt+O(\epsilon)\ .\\
\end{split}
\end{equation}

Next we find a connection between $\delta  \rho(t)$ and $\delta S(t)$. 
Notice that
$$\frac{d\rho}{dt}+\epsilon \frac{d}{dt}\delta \rho=\textrm{div}_G((\rho+\epsilon \delta \rho)\nabla_G (S+\epsilon \delta S+o(\epsilon))\ . $$
By comparing the order $\epsilon$ term, we have
\begin{equation*}
\begin{split}
\frac{d}{dt}\delta\rho_j=&\sum_{l\in N(j)}\omega_{jl}( S_l- S_j)[\frac{\partial g_{jl}}{\partial \rho_j}\delta \rho_j+\frac{\partial g_{jl}}{\partial \rho_l}\delta \rho_l]+\textrm{div}_G( \rho  \nabla_G\delta S)\ .
\end{split}
\end{equation*}
Then 
\begin{equation}\label{s3}
\begin{split}
\int_0^1 \sum_{j=1}^n S_j\frac{d}{dt}\delta \rho_j dt
=&\int_0^1 \sum_{j=1}^n\sum_{l\in N(j)} \omega_{lj}S_i( S_l- S_j)[\frac{\partial g_{lj}}{\partial \rho_j}\delta \rho_j+\frac{\partial g_{jl}}{\partial \rho_l}\delta \rho_l]+\sum_{j=1}^n S_j \textrm{div}_G(\rho\nabla_G\delta S)|_j dt\\
=&\int_0^1 (\frac{1}{2}+\frac{1}{2})\sum_{j=1}^n\delta \rho_j\sum_{l\in N(j)}\omega_{lj}( S_l- S_j)^2\frac{\partial g_{lj}}{\partial \rho_j}+(\nabla_G S, \nabla_G\delta S )_{\rho} dt \\
=&(T_1)+\int_0^1\frac{1}{2}\sum_{j=1}^n \delta \rho_j\sum_{l\in N(j)}\omega_{jl}( S_l- S_j)^2\frac{\partial g_{lj}}{\partial \rho_j}dt+O(\epsilon)\ ,
\end{split}
\end{equation}
where the last equality is from \eqref{15}. Substituting \eqref{s3} into \eqref{s0}, we have
\begin{equation*}
\begin{split}
0=&\lim_{\epsilon\rightarrow 0}\frac{\mathcal{J}(\nabla_G S^\epsilon)-\mathcal{J}(\nabla_G S)}{\epsilon}=\lim_{\epsilon \rightarrow 0}~\{T-\int_0^1 \sum_{j=1}^n\delta\rho_j\frac{\partial}{\partial\rho_j}\mathcal{F}(\rho)dt+O(\epsilon)\}\\
=& \int_0^1 \sum_{j=1}^n S_j\frac{d}{dt}\delta \rho_j dt-\frac{1}{2}\sum_{j=1}^n \delta \rho_j\sum_{l\in N(j)}\omega_{lj}( S_l- S_j)^2\frac{\partial g_{lj}}{\partial \rho_j}dt-\sum_{j=1}^n\delta\rho_j\frac{\partial}{\partial\rho_j}\mathcal{F}(\rho)dt\\
=&-\int_0^1 \sum_{j=1}^n \delta \rho_j \{\frac{d}{dt} S_j+\frac{1}{2}\sum_{l\in N(j)}\omega_{lj}( S_l- S_j)^2\frac{\partial g_{lj}}{\partial \rho_j}+\frac{\partial }{\partial \rho_j}\mathcal{F}( \rho)\}dt\ ,\end{split}
\end{equation*}
where the last equality is from integration by parts and $\delta \rho(0)=\delta \rho(1)=0$. Since $\delta\rho_j(t)$ with $\sum_{j=1}^n \delta\rho_j(t)=0$ can be any smooth function, we obtain
 \begin{equation*}
\frac{d}{dt} S_j+\frac{1}{2}\sum_{l\in N(j)}\omega_{jl}( S_j- S_l)^2\frac{\partial g_{jl}}{\partial \rho_j}+\frac{\partial }{\partial \rho_j}\mathcal{F}( \rho)=c(t)\ ,
\end{equation*}
for any smooth function $c(t)\in \mathbb{R}$. We denote $C(t)=\int_0^t c(s)ds$, then $\bar S_j(t) = S_j(t)-C(t)$, and together with $\rho(t)$, satisfy \eqref{GNLS}.
\end{proof}
\begin{corollary}
Let $(v(t), \rho(t))$ be a critical point of \eqref{2_Nelson}, and $S(t)$ a function on $G$ that induces $v(t)$, 
then $S(t))$ and $\rho(t)$ satisfy \eqref{GNLS} if and only if 
\begin{equation}\label{statement}
\sum_{j=1}^nS_j(t)\rho_j(t)=\sum_{j=1}^nS_j(0)\rho_j(0)+\int_0^t\{\frac{1}{2}(\nabla_G S, \nabla_G S)_\rho-\frac{h^2}{8}\mathcal{I}(\rho)-\mathcal{V}(\rho)-2\mathcal{W}(\rho)\} ds\ .
\end{equation}
\end{corollary}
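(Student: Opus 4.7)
The statement is an equivalence, so I would prove each direction separately. The forward direction reduces to a direct computation along GNLS solutions, while the reverse direction reduces, via Theorem~\ref{derivation}, to showing that the time-dependent shift $C(t)$ produced there is in fact a constant.

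For the forward direction, assume $(S,\rho)$ satisfies \eqref{GNLS} and differentiate $\phi(t):=\sum_{j=1}^n S_j(t)\rho_j(t)$. Two ingredients then give exactly the integrand of \eqref{statement}. First, substituting the continuity equation into $\sum_j S_j\dot\rho_j$ and applying the discrete integration-by-parts identity $\sum_j f_j\,\textrm{div}_G(\rho v)|_j=-(\nabla_G f,v)_\rho$ with $f=S$ and $v=\nabla_G S$ collapses this sum to a multiple of $(\nabla_G S,\nabla_G S)_\rho$. Second, substituting the Hamilton--Jacobi equation rewrites $\sum_j\rho_j\dot S_j$ as $-\sum_j\rho_j\,\partial_{\rho_j}\mathcal{H}$, where $\mathcal{H}$ is the Hamiltonian from \eqref{g_ham}; I then apply Euler's identity, noting that the kinetic term $\tfrac{1}{2}(\nabla_G S,\nabla_G S)_\rho$, the Fisher information $\mathcal{I}(\rho)$, and the linear potential $\mathcal{V}(\rho)$ are homogeneous of degree $1$ in $\rho$, while $\mathcal{W}(\rho)$ is of degree $2$. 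The degree-$1$ homogeneity of $\mathcal{I}$ is the only nonobvious point and follows from the scale invariance of $\log\rho_j-\log\rho_l$ combined with the linearity of $g_{jl}$ in $\rho$. Combining the two contributions and integrating from $0$ to $t$ produces \eqref{statement}.

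For the reverse direction, assume \eqref{statement} and invoke Theorem~\ref{derivation}(b) to obtain a scalar function $C(t)$ such that $\bar S_j(t):=S_j(t)-C(t)$ together with $\rho(t)$ satisfies \eqref{GNLS}. Since $\sum_j\rho_j\equiv 1$ and $\nabla_G S=\nabla_G\bar S$, I can write $\sum_j S_j\rho_j=\sum_j\bar S_j\rho_j+C(t)$, while the integrand of \eqref{statement} depends on $S$ only through $\nabla_G S$ and is therefore unchanged when $S$ is replaced by $\bar S$. Applying the already-proved forward direction to $(\bar S,\rho)$ gives the same identity as \eqref{statement} but with $\bar S$ in place of $S$; subtracting this from \eqref{statement} forces $C(t)=C(0)$ for all $t$. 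Thus $S$ and $\bar S$ differ by a constant that is independent of both $j$ and $t$, and since \eqref{GNLS} is invariant under adding such a constant to $S$, we conclude that $(S,\rho)$ itself satisfies \eqref{GNLS}.

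The calculus is short, so the main obstacle is purely bookkeeping: tracking the factor of $2$ that arises when passing between sums over directed and undirected edges, and ensuring that the sign conventions attached to $\textrm{div}_G$, the continuity equation, and the Hamilton--Jacobi equation are applied consistently. Once these points are verified, the forward direction is essentially a one-line integration after applying Euler's identity, and the reverse direction is a one-line subtraction once Theorem~\ref{derivation} has been invoked.
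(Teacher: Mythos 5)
Your proof is correct and follows essentially the same route as the paper: both hinge on computing $\frac{d}{dt}\sum_{j}S_j\rho_j$ from the continuity and Hamilton--Jacobi equations and on the degree-one homogeneity of the kinetic term, $\mathcal{I}$, and $\mathcal{V}$ versus the degree-two homogeneity of $\mathcal{W}$ (the paper verifies $\sum_j\rho_j\partial_{\rho_j}\mathcal{I}=\mathcal{I}$ by direct computation where you invoke Euler's identity, and it carries the shift $c(t)$ through a single calculation where you split into two directions via Theorem~\ref{derivation}). These are only organizational differences, and your handling of the reverse direction by subtracting the identity for $\bar S$ to force $C(t)$ constant is sound.
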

\begin{proof}
From the proof of Theorem \ref{derivation}, we know 
$$\frac{dS_j}{dt} +\frac{1}{2}\sum_{l\in N(j)}\omega_{jl}( S_j- S_l)^2\frac{\partial g_{jl}}{\partial \rho_j}+\frac{\partial }{\partial \rho_j}\mathcal{F}( \rho)=c(t)\ .$$ 
Then by direct calculations, we obtain
\begin{equation*}
\begin{split}
\frac{d}{dt}(\sum_{j=1}^nS_j(t)\rho_j(t))=&\sum_{j=1}^n[\frac{dS_j}{dt}\rho_j(t)+\frac{d\rho_j}{dt}S_j(t)]\\
=& \sum_{j=1}^n[-\frac{1}{2}\sum_{l\in N(j)}\omega_{jl}( S_j- S_l)^2\frac{\partial g_{jl}}{\partial \rho_j}-\frac{\partial }{\partial \rho_j}\mathcal{F}( \rho)+c(t)]\rho_j(t) + (\nabla_G S, \nabla_G S)_\rho    \\
=&\frac{1}{2}(\nabla_G S, \nabla_GS)_\rho-\sum_{j=1}^n\frac{\partial }{\partial \rho_j}\mathcal{F}( \rho)\rho_j(t)+c(t)\ .
\end{split}
\end{equation*}
We note 
\begin{equation*}
\begin{split}
\sum_{j=1}^n\frac{\partial }{\partial \rho_j}\mathcal{F}(\rho)\rho_j=&\sum_{j=1}^n\frac{\partial}{\partial\rho_j}(\frac{h^2}{8}\mathcal{I}(\rho)+\mathbb{V}^T\rho+\frac{1}{2}\rho^T\mathbb{W}\rho)\rho_j\\
=&\sum_{j=1}^n\frac{h^2}{8}\frac{\partial}{\partial\rho_j}\mathcal{I}(\rho)\rho_j+\mathbb{V}^T\rho+\frac{1}{2}\rho^T\mathbb{W}\rho+\frac{1}{2}\rho^T\mathbb{W}\rho\ ,\\
\end{split}
\end{equation*}
and
\begin{equation*}
\begin{split}
\sum_{j=1}^n \frac{\partial}{\partial\rho_j}\mathcal{I}(\rho)\cdot\rho_j
=&\sum_{j=1}^n\sum_{l\in N(j)}\omega_{jl}(\log\rho_l-\log\rho_j)^2\frac{\partial g_{jl}}{\partial\rho_j}\rho_j+2\sum_{j=1}^n\sum_{l\in N(j)}\omega_{jl}\frac{1}{\rho_j}(\log\rho_j-\log\rho_l)\rho_j\\
=&\frac{1}{2}\sum_{l=1}^n\sum_{l\in N(j)}\omega_{jl}(\log\rho_j-\log\rho_l)^2g_{jl}(\rho)+2\sum_{(j,l)\in E}\omega_{jl}(\log\rho_j-\log\rho_l)\\
=&\mathcal{I}(\rho)\ .
\end{split}
\end{equation*}
Combining the calculations together, we have
$$
\frac{d}{dt}(\sum_{j=1}^nS_j(t)\rho_j(t))=\frac{1}{2}(\nabla_G S, \nabla_GS)_\rho - \mathcal{F}(\rho)-\mathcal{W}(\rho)+ c(t)\ .
$$
Therefore $c(t)=0$ if and only if \eqref{statement} holds. 
\end{proof}

In fact, the construction of $S$ suggests  
$S= L(\rho)^{-1}\dot\rho$. This implies $$(\nabla_G S, \nabla_G S)_\rho=S^T L(\rho) S= \dot \rho^TL(\rho)^{-1}\cdot L(\rho) \cdot L(\rho)^{-1}\dot \rho= \dot \rho ^T L(\rho)^{-1}\dot\rho\ .$$
Thus the discrete Nelson's problem can be re-written as a geometric variational problem on the probability density manifold  $\mathcal{P}_o(G)$
\begin{equation} \label{dnp2}
\inf_{\rho}\{\int_0^1 \dot\rho^T L(\rho)^{-1}\dot \rho -\frac{h^2}{8}\mathcal{I}(\rho)- \mathcal{V}(\rho)-\mathcal{W}(\rho) dt ~:~\rho(0)=\rho^0\ ,~\rho(1)=\rho^1 \ , ~\rho(t)\in \mathcal{C}\}\ ,
\end{equation}
where $\mathcal{C}$ is the set of continuous differentiable curve in $\mathcal{P}_o(G)$. A solutions of \eqref{g_ham} is a critical point of \eqref{dnp2}.
\subsection{Complex formulations}
In this sequel, we reformulate \eqref{GNLS} into a complex wave equation. 
Let us define $$\Psi(t)=(\Psi_j(t))_{j=1}^n=(\sqrt{\rho_j(t)} e^{i \frac{ S_j(t)}{h}})_{j=1}^n\ ,$$ 
where $(\rho(t), S(t))$ are solutions of \eqref{GNLS}, then $\Psi(t)$ satisfies the following complex value ODE system.
\begin{equation}\label{CNLS}
h i \frac{d\Psi_j}{dt}=-\frac{h^2}{2}\Delta_G\Psi|_j+\Psi_j \mathbb{V}_j+\Psi_j \sum_{l=1}^n\mathbb{W}_{jl}|\Psi_l|^2\ ,
\end{equation}
in which the Laplacian on graph is defined by
\begin{equation*}
\Delta_G \Psi|_j:=-\Psi_j\big(\frac{1}{|\Psi_j|^2}\sum_{l\in N(j)}\omega_{jl}(\log \Psi_j-\log \Psi_l){g_{jl}}+\sum_{l\in N(j)}\omega_{jl}|\log \Psi_j -\log \Psi_l|^2\frac{\partial g_{jl}}{\partial \rho_j }\big)\ .
\end{equation*}
One may wonder when seeing the Laplace operator in such a nonlinear way. However, a closer examination demonstrates that this graph Laplacian is consistent with the one in the continuous case. In fact, we can show the following relationship in the continuous space. 
Let $\Psi(t,x)$ be a complex function defined in $\mathbb{R}^d$, then 
\begin{equation}\label{identity}
\Delta \Psi=\Psi\{\frac{1}{|\Psi|^2}\nabla\cdot(|\Psi|^2 \nabla \log \Psi)-|\nabla\log \Psi|^2\}\ .
\end{equation}
\begin{proof}[Proof of \eqref{identity}]
Denote $\Psi(t,x)=\sqrt{ \rho(t,x)}e^{i \frac{ S(t,x)}{h}}=e^{\frac{1}{2}\log \rho(t,x)+i \frac{ S(t,x)}{h}}$, we have
\begin{equation*}
\begin{split}
\Delta \Psi=&\nabla \cdot (\nabla \Psi)=\nabla \cdot [\Psi (\frac{1}{2}\nabla\log \rho+i \frac{\nabla  S}{h})]\\
=&\Psi[(\frac{1}{2}\nabla\log \rho+i \frac{\nabla  S}{h})^2+ (\frac{1}{2}\Delta\log \rho+i\Delta \frac{ S}{h})]\\
=&\Psi[\frac{1}{2}(\nabla \log \rho)^2+\frac{1}{2}\Delta \log \rho+i\nabla \log \rho\cdot\nabla \frac{ S}{h}+i\Delta\frac{ S}{h}-\frac{1}{4}(\nabla \log \rho)^2-(\nabla \frac{ S}{h})^2]\\
=&\Psi[\frac{1}{ \rho}\nabla\cdot( \rho \nabla(\frac{1}{2}\log \rho+ i \frac{ S}{h}))-(\frac{1}{2}\nabla\log \rho)^2-(\nabla \frac{ S}{h})^2]\\
=&\Psi(\frac{1}{|\Psi|^2}\nabla\cdot(|\Psi|^2 \nabla \log \Psi)-|\nabla\log \Psi|^2)\ ,
\end{split}
\end{equation*}
where the first equality uses $\frac{1}{ \rho}\nabla  \rho=\nabla \log \rho$. 
while the second to the last equality uses the fact\begin{equation*}
\begin{split}
&\frac{1}{ \rho}\nabla\cdot( \rho \nabla(\frac{1}{2}\log \rho+ i \frac{ S}{h}))\\
=&\frac{1}{ \rho}[\frac{1}{2}\nabla  \rho\cdot \nabla\log \rho+ \rho\Delta \log \rho +i\nabla  \rho\cdot \frac{ S}{h}+ \rho\Delta  S]\\
=&\frac{1}{2}(\nabla \log \rho)^2+\frac{1}{2}\Delta \log \rho+i\nabla \log \rho\cdot\nabla \frac{ S}{h}+i\Delta\frac{ S}{h}\ .
\end{split}
\end{equation*}
\end{proof}

The nonlinearity in the Laplace operator allows the discrete NLS possessing many desirable dynamical properties, which will be shown in the next two sections.  

\section{Some Properties}\label{Ham}
For the convenience of presentation, we do not distinguish \eqref{GNLS} and its complex wave version \eqref{CNLS} in the discussion. The results here are always proposed for formulation \eqref{CNLS} while all proofs are based on $\eqref{GNLS}$. 

Our first task is examining the dispersion relation in the absence of potentials. 

\noindent\textbf{Proposition:} For a uniform toroidal graph $G$, i.e. a graph that every node has the same number of 
adjacent nodes and the weight on each edge is uniformly given,  
the plane wave function $\Psi(t)=A (e^{i(k \cdot j\Delta x-\mu t)})_{j=1}^n$,
with any $\mu=\frac{1}{2}|k|^2$ and $A \ge 0$, satisfies 
$$i\frac{d}{dt}\Psi=-\frac{1}{2}\Delta_G\Psi\ .$$

The proposition can be verified by directly substituting the plane wave function in \eqref{GNLS}. 

In what follows, we show that \eqref{GNLS} is a well defined ODE system having several desirable properties such 
as total mass and energy conservation, time reversibility,  and gauge invariant. In addition, its interior stationary solution shares the same property as that for the counterpart in the continuous case.
\begin{theorem}\label{th5}
Given a simple weighted graph $G=(V, E, \omega)$, a vector $(\mathbb{V}_l)_{l=1}^n$, a symmetric matrix $(\mathbb{W}_{jl})_{1\leq j,l \leq n}$, and an initial condition $\Psi^0=(\Psi^0_j)_{j=1}^n$ (complex vector) satisfying 
\begin{equation*}
\sum_{j=1}^n|\Psi_j^0|^2=1\ ,\quad |\Psi_j^0|>0\ , \quad \textrm{for any $j\in V$}\ .
\end{equation*}
Then equation \eqref{GNLS} has a unique solution $\Psi(t)$ for all $t\in [0, \infty)$. Moreover, $\Psi(t)$ satisfies following properties:
\begin{itemize}
\item[(i)] It conserves the total mass $$\sum_{j=1}^n |\Psi_j(t)|^2=1\ ;$$
\item[(ii)] It conserves the total energy $$\mathcal{E}(\Psi(t))=\mathcal{E}(\Psi^0)\ ,$$
where $\mathcal{E}$ is a combination of the discrete Kinetic energy $\mathcal{E}_{kin}$, linear potential energy
$\mathcal{E}_{pot}$ and interaction potential energy $\mathcal{E}_{int}$, i.e.
\begin{equation}\label{energy}
\mathcal{E}(\Psi)=h^2\mathcal{E}_{kin}(\Psi)+\mathcal{E}_{pot}(\Psi)+\mathcal{E}_{int}(\Psi)\ .
\end{equation}
They are given by the following definitions:  
\begin{equation*}
\begin{split}
 \mathcal{E}_{kin}(\Psi)=&\frac{1}{4}\sum_{(j,l)\in E}\{[\textrm{Re}(\log \Psi_j-\log \Psi_l)]^2+[\textrm{Im}(\log \Psi_j-\log \Psi_l)]^2\}g_{jl}(|\Psi|^2)\ ,\\
 \mathcal{E}_{pot}(\Psi)=&\sum_{j=1}^n\mathbb{V}_j|\Psi_j|^2, \quad \mathcal{E}_{int}(\Psi)=\frac{1}{2}\sum_{j=1}^n\sum_{l=1}^n\mathbb{W}_{jl}|\Psi_j|^2|\Psi_l|^2\ ;
\end{split}
\end{equation*}
\end{itemize}

\item[(iii)] It is time reversible:
\begin{equation*}
\Psi(t)=\bar\Psi(-t)\ ;
\end{equation*}
\item[(iv)] It is time transverse (gauge) invariant: Denote $\Psi^\alpha(t)$ as the solution of  \eqref{GNLS} with $\mathbb{V}^\alpha=(\mathbb{V}_j+\alpha)_{j=1}^n$, 
where $\alpha$ is a given real constant,
then 
\begin{equation*}
\Psi^\alpha(t)=\Psi(t) e^{i\frac{\alpha t}{h}}\ ;
\end{equation*}
\item[(v)] The interior stationary solution of \eqref{GNLS} shares a similar property as the one in continuous state:
 If $\Psi^*(t)=\sqrt{\rho^*}e^{-i \nu t}$ satisfies \eqref{GNLS},
where $\nu\in \mathbb{R}$ and vector $\rho^*=( \rho ^*_j)_{j=1}^n\in \mathcal{P}_o(G)$ are time invariant,
then $\rho^*$ is the critical point of the minimization problem: 
\begin{equation*}
\min_{\rho\in \mathcal{P}(G)}~\mathcal{E}(\sqrt{\rho})\ ,
\end{equation*}
and
\begin{equation*}
\nu=\mathcal{E}(\sqrt{\rho^*})+\mathcal{E}_{int}(\sqrt{\rho^*})\ .
\end{equation*}
\end{theorem}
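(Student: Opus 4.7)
The plan is to treat each item by exploiting the Hamiltonian form \eqref{g_ham}. I start with local well-posedness: on any compact subset of $\mathcal{P}_o(G)$ the right-hand sides of \eqref{GNLS} are smooth in $(\rho,S)$ (the only potentially singular piece, $\partial_{\rho_j}\mathcal{I}(\rho)$, involves $\log\rho$ but is $C^\infty$ whenever every $\rho_k>0$), so Picard--Lindel\"of yields a unique smooth solution on a maximal interval $[0,T_{\max})$. Summing the continuity equation over $j$ and using the antisymmetry $(j,l)\leftrightarrow (l,j)$ kills the double sum, giving $\frac{d}{dt}\sum_j\rho_j=0$, which is (i). Item (ii) is the standard conservation of the Hamiltonian for $\dot u=\mathbb{J}\,\nabla_u\mathcal{H}(u)$; the identification $\mathcal{E}(\Psi)=\mathcal{H}(\rho,S)$ is a routine check via $\mathrm{Re}(\log\Psi_j-\log\Psi_l)=\tfrac12(\log\rho_j-\log\rho_l)$ and $\mathrm{Im}(\log\Psi_j-\log\Psi_l)=(S_j-S_l)/h$.

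To upgrade local to global existence, suppose $T_{\max}<\infty$. Since $\sum_j\rho_j=1$ the trajectory cannot escape to infinity, so the only possible failure is that some $\rho_{j_0}(t)\to 0$ as $t\to T_{\max}^-$. By connectedness of $G$ together with $\sum_j\rho_j=1$, there exists an edge $(j_0,l_0)\in E$ with $\rho_{l_0}$ bounded below by a positive constant along a subsequence while $\rho_{j_0}\to 0$. Then $(\log\rho_{j_0}-\log\rho_{l_0})^2\,g_{j_0 l_0}(\rho)\to\infty$, forcing $\mathcal{I}(\rho(t))\to\infty$. Since $\mathcal{V}(\rho)$, $\mathcal{W}(\rho)$, and $(\nabla_G S,\nabla_G S)_\rho\ge 0$ are all bounded by the conserved $\mathcal{H}$, this contradicts (ii). Hence $T_{\max}=\infty$ and $\rho(t)$ stays in a compact subset of $\mathcal{P}_o(G)$.

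Items (iii) and (iv) follow by the substitution-plus-uniqueness trick. For (iii), set $\tilde\Psi(t)=\overline{\Psi(-t)}$, so that $\tilde\rho(t)=\rho(-t)$ and $\tilde S(t)=-S(-t)$; a direct computation (using that the Hamilton--Jacobi terms involve only squares of differences of $S$) shows $(\tilde\rho,\tilde S)$ satisfies \eqref{GNLS}, and agreement at $t=0$ forces $\tilde\Psi=\Psi$. For (iv), the ansatz $\rho^\alpha(t)=\rho(t)$, $S^\alpha(t)=S(t)-\alpha t$ solves \eqref{GNLS} with the shifted potential $\mathbb{V}_j+\alpha$, which at the level of the complex wave function is exactly the claimed gauge factor $e^{\pm i\alpha t/h}$.

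Finally, for (v) I substitute $\Psi^*_j(t)=\sqrt{\rho^*_j}\,e^{-i\nu t}$, giving $S^*_j=-\nu h t$. The continuity equation is automatic since $S^*_j-S^*_l=0$, and the Hamilton--Jacobi equation collapses to the nodewise identity
\begin{equation*}
\tfrac{h^2}{8}\tfrac{\partial}{\partial \rho_j}\mathcal{I}(\rho^*)+\mathbb{V}_j+\sum_l\mathbb{W}_{jl}\rho^*_l=\nu h,\qquad \forall\, j\in V,
\end{equation*}
which is precisely the Lagrange-multiplier condition for $\rho^*$ to be a critical point of $\mathcal{E}(\sqrt{\rho})=\tfrac{h^2}{8}\mathcal{I}(\rho)+\mathcal{V}(\rho)+\mathcal{W}(\rho)$ on the simplex $\sum_j\rho_j=1$. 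Multiplying by $\rho^*_j$, summing in $j$, and applying the identity $\sum_j \rho_j\,\partial_{\rho_j}\mathcal{I}(\rho)=\mathcal{I}(\rho)$ (established in the proof of the corollary above) yields $\nu h=\mathcal{E}(\sqrt{\rho^*})+\mathcal{E}_{int}(\sqrt{\rho^*})$, which is the stated formula for $\nu$. The main obstacle in the argument is the global-existence step: converting the local collapse of a single $\rho_j$ into a blow-up of the Fisher information is where graph connectedness genuinely enters, and it is what prevents the Hamiltonian flow from reaching $\partial\mathcal{P}(G)$ in finite time.
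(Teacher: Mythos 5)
Your argument follows the paper's proof in all essentials: Picard for local existence, conservation of mass and of the Hamiltonian via the symplectic form, blow-up of the Fisher information at $\partial\mathcal{P}(G)$ combined with energy conservation to rule out finite-time boundary collision (the paper packages this as the compact sublevel set $B$ of $\mathcal{I}$, you phrase it as a contradiction at $T_{\max}$ --- same mechanism, same use of connectedness), direct substitution for (iii)--(iv), and the KKT condition plus the Euler identity $\sum_j\rho_j\,\partial_{\rho_j}\mathcal{I}(\rho)=\mathcal{I}(\rho)$ for (v). The only discrepancy is the factor of $h$ in (v): with $\Psi_j=\sqrt{\rho_j}\,e^{iS_j/h}$ and $\Psi^*=\sqrt{\rho^*}\,e^{-i\nu t}$ your Lagrange multiplier is indeed $\nu h$, whereas the paper states the identity for $\nu$ itself (implicitly absorbing the $h$ into the frequency convention), so this is a bookkeeping wrinkle in the paper rather than a gap in your proof.
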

\begin{remark}
$\mathcal{E}_{kin}$ in \eqref{energy} is an analog of the Kinetic energy in continuous case: 
\begin{equation*}
\int_{\mathbb{R}^d}|\nabla \Psi|^2dx=\int_{\mathbb{R}^d}([\textrm{Re}(\nabla \log\Psi)]^2+[\textrm{Im}(\nabla \log\Psi)]^2)|\Psi|^2dx\ .
\end{equation*}
\end{remark}
\begin{remark}
Equations \eqref{GNLS} are always well defined in the interior of probability set $\mathcal{P}(G)$. In fact, we shall show that the boundary of probability set $\mathcal{P}(G)$ is a repeller for \eqref{GNLS}.
\end{remark}

\begin{proof}
We show that for any given initial condition $ \rho^0\in \mathcal{P}_o(G)$, there exists a unique solution ($ \rho(t)$, $ S(t)$) for all $t>0$. Since the right hand side of \eqref{GNLS} is locally Lipchitz continuous and $ \rho^0\in \mathcal{P}_o(G)$, from Picard's existence theorem, there exists a unique solution ($ \rho(t)$, $ S(t)$) in time interval $[0, T( \rho^0))$, where $T( \rho^0)$ is the maximal time that the solution exists. We will prove $T( \rho^0)=+\infty$ by the following claim.

\textbf{Claim 2}: {\em For any given $ \rho^0\in \mathcal{P}_o(G)$, there exists a compact set $B\subset \mathcal{P}_o(G)$, such that $T( \rho^0)=\infty$ and $ \rho(t)\in B$.}

The proof of claim 2 is based on two facts. On one hand, the ODE system \eqref{GNLS} is a Hamiltonian system on probability set, which conserves the total mass and total energy; On the other hand, the total energy contains the Fisher information $\mathcal{I}(\rho)$. On the boundary of $\mathcal{P}_o(G)$, $\mathcal{I}(\rho)$ is positive infinity, so is the total energy. From the conservation of total energy, it is not hard to see that the boundary of $\mathcal{P}_o(G)$ is a repeller for $ \rho(t)$.

\begin{proof}[Proof of Claim 2]
We construct a set $B\subset \mathcal{P}(G)$:
\begin{equation*}
B=\{ \rho\in \mathcal{P}(G)~:~\frac{h^2}{8}\mathcal{I}(\rho)\leq  \mathcal{E}(\Psi^0)-\min_{ \rho\in \mathcal{P}(G)}[\mathcal{V}( \rho)+\mathcal{W}( \rho)]~\}\ ,
\end{equation*}
where $\mathcal{E}(\Psi^0)=\mathcal{H}( \rho^0,  S^0)=\frac{1}{2}(\nabla_G S^0, \nabla_G S^0)_{ \rho^0}+\frac{h^2}{8}\mathcal{I}( \rho^0)+\mathcal{V}( \rho^0)+\mathcal{W}( \rho^0)<\infty\ .$ Obviously, $B$ is not empty.

We will prove that $B$ is a compact set and $\rho(t)\subset B$ for all $t>0$ by following three steps.

Step 1, we prove (i) and (ii) for $t\in [0, T( \rho^0))$. Since 
\begin{equation*}
\sum_{j=1}^n\frac{d\rho_j}{dt}=-\sum_{j=1}^n \textrm{div}_G( \rho \nabla_G  S)|_j =0\ ,
\end{equation*}
(i) is concluded.  For (ii), we need to show \begin{equation*}
\frac{d}{dt}\mathcal{E}(\Psi(t))=0\ ,
\end{equation*}
where $\mathcal{E}(\Psi)=\mathcal{H}(\rho, S)$.
Notice \eqref{GNLS} has the following symplectic form
\begin{equation*}
\frac{d}{dt}\begin{pmatrix}   \rho \\  S \end{pmatrix}=\mathbb{J}\begin{pmatrix}\frac{\partial}{\partial  \rho}\mathcal{H}\\ \frac{\partial}{\partial  S}\mathcal{H}\end{pmatrix}\ ,
\end{equation*}
then
 \begin{equation*}
 \begin{split}
 \frac{d}{dt}\mathcal{E}(\Psi(t))=&\frac{d}{dt}\mathcal{H}( \rho(t), S(t))=\sum_{j=1}^n\{\frac{\partial}{\partial \rho_j}\mathcal{H} \frac{d}{dt}\rho_j+\frac{\partial}{\partial  S_j}\mathcal{H} \frac{d}{dt} S_j\}\\
 =&\sum_{j=1}^n\{\frac{\partial}{\partial \rho_j}\mathcal{H} \frac{\partial }{\partial  S_j}\mathcal{H}-\frac{\partial}{\partial  S_j}\mathcal{H}\frac{\partial}{\partial \rho_j}\mathcal{H}\}=0\ .
\end{split}
\end{equation*}

Step 2, we show that $\mathcal{I}(\rho)$ is positive infinity on the boundary, i.e.
\begin{equation*}
\lim_{\min_{j\in V}{\rho_j}\rightarrow 0}\mathcal{I}(\rho)=+\infty\ .
\end{equation*}
Assume the above is not true, there exists a constant $M>0$, such that if $\min_{i\in V}\rho_j=0$, then
\begin{equation*}
M\geq \mathcal{I}(\rho)=\frac{1}{2}\sum_{(j,l)\in E}\omega_{jl}(\log\rho_j-\log\rho_l)^2\frac{\rho_j+\rho_l}{2}\geq \frac{1}{4}\sum_{(j,l)\in E}\omega_{jl}(\log\rho_j-\log\rho_l)^2 \max\{\rho_j, \rho_l\}\ .
 \end{equation*} 
Hence for any $(j,l)\in E$, we have $$\omega_{jl}(\log\rho_j-\log\rho_l)^2 \max\{\rho_j, \rho_l\}\leq 2M<+\infty\ .$$ 
Since there exists a $j^*\in V$, such that $ \rho_{j^*}=0$, the above formula implies that for any $l\in N(j^*)$, $\rho_l=0$. 
Since $G$ is connected and $V$ is a finite set, by iterating through the nodes, we get $ \rho_1=\cdots= \rho_n=0$, which contradicts the fact that $\sum_{j=1}^n\rho_j=1$. 

Step 3, we claim that $B$ is a compact set. This can be easily verified because $\mathcal{I}$ is a lower semi continuous function, and $\mathcal{I}(\rho)=+\infty$ when $\rho\in \mathcal{P}(G)\setminus \mathcal{P}_o(G)$. 
Hence $B$ is a compact set in $\mathbb{R}^n$.

Let us combine above three steps. Since \eqref{GNLS} is a Hamiltonian system in $\mathcal{P}_o(G)$,
\begin{equation*}
\mathcal{E}(\Psi(t))=\mathcal{E}(\Psi^0)=\frac{1}{2}(\nabla_G S(t), \nabla_G S(t))_{ \rho(t)}+\frac{h^2}{8}\mathcal{I}( \rho(t))+\mathcal{V}( \rho(t))+\mathcal{W}( \rho(t))\ ,
\end{equation*}
then
\begin{equation*}
\begin{split}
\frac{h^2}{8}\mathcal{I}( \rho(t))=& \mathcal{E}(\Psi^0)-\frac{1}{2}(\nabla_G S(t), \nabla_G S(t))_{ \rho(t)}-\big(\mathcal{V}( \rho(t))+\mathcal{W}( \rho(t))\big)\\
\leq&\mathcal{E}(\Psi^0)-\min_{ \rho\in \mathcal{P}(G)}[\mathcal{V}( \rho)+\mathcal{W}( \rho)]\ .\\
\end{split}
\end{equation*}
Thus $ \rho(t)\in B\subset \mathcal{P}_o(G)$ for all $t>0$.

\end{proof}
Next, we prove (iii) and (iv). For (iii), since $\Psi_j=\sqrt{\rho_j} e^{i \frac{ S_j}{h}}$, its conjugate $\bar \Psi$ satisfies 
$$\bar\Psi_j=\sqrt{\rho_j} e^{i \frac{\bar  S_j}{h}}\quad \textrm{with $\bar S_j=- S_j$\ .}$$  Let us look at \eqref{GNLS} by changing $t$ to $-t$.
\begin{equation*}
\left \{
\begin{aligned}
&-\frac{d\rho_j}{dt}+\sum_{l\in N(j)}\omega_{lj}( S_l- S_j)g_{lj}(\rho)=0\ ;\\
&-\frac{d S_j}{dt}+\frac{1}{2}\sum_{j\in N(i)}\omega_{jl}( S_j- S_l)^2\frac{\partial g_{jl}}{\partial \rho_j}+\frac{\partial }{\partial \rho_j}\{\frac{h^2}{8}\mathcal{I}(\rho)+\mathcal{W}( \rho)+\mathcal{V}(\rho)\}=0\ .
\end{aligned}
\right.
\end{equation*}
Denote $\bar S=- S$, then $( \rho(t),  S(t))$ and $( \rho(-t), \bar S(-t))$ satisfies \eqref{GNLS}. 

For (iv), if $V\rightarrow V^\alpha=V+\alpha$, we substitute $\Psi^\alpha(t)=\Psi(t) e^{i \frac{\alpha t}{h}}$ into \eqref{GNLS} to get:
\begin{equation*}
\left \{
\begin{aligned}
&\frac{d\rho_j}{dt}+\sum_{l\in N(j)}\omega_{jl}[( S_l+\alpha t)-( S_j+\alpha t)]g_{jl}(\rho)=0\ ;\\
&\frac{d}{dt}( S_j+\alpha t)+\frac{1}{2}\sum_{l\in N(j)}\omega_{jl}[( S_l+\alpha t)-( S_j+\alpha t)]^2\frac{\partial g_{jl}}{\partial \rho_j}+\mathbb{V}_j+\frac{\partial }{\partial \rho_j}\{\frac{h^2}{8}\mathcal{I}(\rho)+\mathcal{W}( \rho)\}=0\ .
\end{aligned}
\right.
\end{equation*}
This means that if $(\rho, S)$ are solutions of \eqref{GNLS} with $V$, then 
$ S^\alpha(t)= S+\alpha t$, $ \rho^\alpha(t)= \rho(t) $ are solutions of \eqref{GNLS} with $V^\alpha$, i.e.
\begin{equation*}
\Psi^\alpha=\sqrt{ \rho^\alpha}e^{i\frac{ S^\alpha}{h}}=\sqrt{\rho}e^{i\frac{ S}{h}}e^{i\frac{\alpha t}{h}}=\Psi e^{i\frac{\alpha t}{h}}\ .
\end{equation*}

(v). Substituting the stationary solution $\Psi^*$ into \eqref{GNLS}, we observe
\begin{equation*}
\nu=\frac{\partial }{\partial\rho_j}\big(\frac{h^2}{8}\mathcal{I}(\rho)+\mathcal{V}(\rho)+\mathcal{W}(\rho)\big)|_{\rho=\rho^*}\ ,\quad \textrm{for any $i\in V$}\ .
\end{equation*}
Notice that $\mathcal{E}(\sqrt{\rho})=\frac{h^2}{8}\mathcal{I}(\rho)+\mathcal{V}(\rho)+\mathcal{W}(\rho)$. It is simple to check that $\rho^*$ satisfies the Karush-Kuhn-Tucker conditions of minimization 
 \begin{equation*}
 \min_{\rho}\{\mathcal{E}(\sqrt{\rho}):~\sum_{j=1}^n\rho_j=1,\quad\rho_j> 0\}\ ,
\end{equation*}
with $\nu$ being the Lagrange multiplier. Next, we show 
\begin{equation*}
\begin{split}
\nu=&\sum_{j=1}^n\nu\rho_j^*=\sum_{j=1}^n\frac{\partial}{\partial\rho_j}\{\frac{h^2}{8}\mathcal{I}+\mathcal{V}+\mathcal{W}\}|_{\rho^*}\cdot\rho_j^*\\
=&\frac{h^2}{8}\sum_{j=1}^n \frac{\partial}{\partial\rho_j}\mathcal{I}|_{\rho^*}\rho^*_j+\sum_{j=1}^n [\mathbb{V}_j+\sum_{l=1}^n\mathbb{W}_{jl}\rho^*_l]\rho^*_j\\
=&\frac{h^2}{8}\sum_{j=1}^n \frac{\partial}{\partial\rho_j}\mathcal{I}|_{\rho^*}\rho_j^*-\frac{h^2}{8}\mathcal{I}( \rho ^*)+\frac{h^2}{8}\mathcal{I}( \rho ^*)+\mathcal{V}( \rho ^*)+\mathcal{W}( \rho ^*)+\mathcal{W}( \rho ^*)\\
=&\frac{h^2}{8}\big(\sum_{j=1}^n \frac{\partial}{\partial\rho_j}\mathcal{I}|_{\rho^*}\rho_j^*-\mathcal{I}( \rho ^*)\big)+\mathcal{E}(\sqrt{\rho^*})+\mathcal{W}( \rho ^*)\\
=&\mathcal{E}(\sqrt{\rho^*})+\mathcal{E}_{int}(\sqrt{\rho^*})\ ,
\end{split}
\end{equation*}
where the last equality is from the fact: $\mathcal{E}_{int}(\sqrt{\rho})=\mathcal{W}(\rho)$ and $\mathcal{I}(\rho)=\sum_{j=1}^n\frac{\partial}{\partial \rho_j}\mathcal{I}(\rho)\rho_j$.
\end{proof}

\subsection{Ground states}\label{stability}
Similar to Nelson's idea in \cite{Nelson1}, we show that the stationary solution of \eqref{GNLS} in Theorem \ref{th5} (v) is related to the discrete ground state of the NLS.
\begin{corollary}\label{col}
If $\mathbb{W}$ is a semi positive definite matrix, then the stationary state is a ground state
$\Psi^g=\sqrt{\rho^g}e^{-i \nu^g t}$, i.e.
\begin{equation}\label{g_s}
\Psi^g=\arg\min_{\Psi}\{~\mathcal{E}(\Psi)~:~\sum_{j=1}^n|\Psi_j|^2=1\}\ ,
\end{equation}
with 
\begin{equation*}
\rho^g=\arg\min_{\rho\in \mathcal{P}(G)}~\mathcal{E}(\sqrt{\rho})
\quad \textrm{and}\quad\nu^g=\mathcal{E}(\sqrt{\rho^g})+\mathcal{E}_{int}(\sqrt{\rho^g})\ .
\end{equation*}
\end{corollary}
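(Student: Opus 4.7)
The plan is to reduce the complex ground-state problem \eqref{g_s} to the real-valued minimization $\min_{\rho\in\mathcal{P}(G)}\mathcal{E}(\sqrt{\rho})$ already studied in Theorem \ref{th5}(v), to extract a minimizer $\rho^g$, and to exhibit the corresponding stationary wave as the ground state.

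First, I would establish existence of $\rho^g=\arg\min_{\rho\in\mathcal{P}(G)}\mathcal{E}(\sqrt{\rho})$, where $\mathcal{E}(\sqrt{\rho})=\frac{h^2}{8}\mathcal{I}(\rho)+\mathcal{V}(\rho)+\mathcal{W}(\rho)$. The functional is continuous on the open simplex $\mathcal{P}_o(G)$; the linear term $\mathcal{V}$ is bounded on the compact simplex, and semi-positive definiteness of $\mathbb{W}$ gives $\mathcal{W}(\rho)=\frac{1}{2}\rho^T\mathbb{W}\rho\geq 0$, so both remain bounded. On the other hand, Step 2 in the proof of Theorem \ref{th5} shows $\mathcal{I}(\rho)\to+\infty$ as $\rho\to\partial\mathcal{P}(G)$. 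Hence $\mathcal{E}(\sqrt{\cdot})$ is coercive on $\mathcal{P}(G)$ and attains its infimum at some $\rho^g\in\mathcal{P}_o(G)$. Because $\rho^g$ is an interior minimum under the single affine constraint $\sum_j\rho_j=1$, the KKT conditions produce a multiplier $\nu^g\in\mathbb{R}$ with $\partial_{\rho_j}\mathcal{E}(\sqrt{\rho^g})=\nu^g$ for every $j$. Substituting $\Psi^g(t)=\sqrt{\rho^g}\,e^{-i\nu^g t}$, equivalently $S_j(t)\equiv-\nu^g t$, into \eqref{GNLS} collapses the continuity equation to $0=0$ and reduces the Hamilton-Jacobi equation precisely to the KKT identity, so $\Psi^g$ is a stationary solution. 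The identification $\nu^g=\mathcal{E}(\sqrt{\rho^g})+\mathcal{E}_{int}(\sqrt{\rho^g})$ then follows from the same arithmetic used at the end of the proof of Theorem \ref{th5}(v): multiply the KKT identity by $\rho^g_j$, sum in $j$, and invoke the homogeneity identities $\sum_j\partial_{\rho_j}\mathcal{I}(\rho)\,\rho_j=\mathcal{I}(\rho)$ and $\sum_j\partial_{\rho_j}\mathcal{W}(\rho)\,\rho_j=2\mathcal{W}(\rho)$.

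The remaining and most delicate step is the variational inequality $\mathcal{E}(\Psi)\geq\mathcal{E}(\sqrt{|\Psi|^2})$ for every complex $\Psi$ with $\sum_j|\Psi_j|^2=1$. Writing $\Psi_j=\sqrt{\rho_j}\,e^{iS_j/h}$ gives $\log\Psi_j=\frac{1}{2}\log\rho_j+iS_j/h$, so the kinetic energy in \eqref{energy} separates cleanly into a real part that depends only on $\rho$ and produces $\frac{h^2}{8}\mathcal{I}(\rho)$, plus an imaginary part equal to the non-negative quadratic form $\frac{1}{2}(\nabla_GS,\nabla_GS)_\rho$, which vanishes precisely when $S$ is constant on the connected graph. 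Since $\mathcal{E}_{pot}$ and $\mathcal{E}_{int}$ depend only on $|\Psi|^2$, one obtains $\mathcal{E}(\Psi)=\mathcal{E}(\sqrt{|\Psi|^2})+\frac{1}{2}(\nabla_GS,\nabla_GS)_{|\Psi|^2}\geq\mathcal{E}(\sqrt{|\Psi|^2})\geq\mathcal{E}(\sqrt{\rho^g})=\mathcal{E}(\Psi^g(t))$, with equality for $\Psi^g$ because its phase is spatially constant. Configurations with $|\Psi_j|=0$ for some $j$ are excluded from the competition because $\log|\Psi_j|=-\infty$ forces $\mathcal{E}_{kin}(\Psi)=+\infty$ there.

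I expect the main obstacle to be the bookkeeping in this last step: one must carefully verify that the weights $\omega_{jl}$ and combinatorial factors inside $\mathcal{E}_{kin}$ match those in $\mathcal{I}$ and in the Wasserstein inner product $(\cdot,\cdot)_\rho$, so that the real and imaginary pieces of $|\log\Psi_j-\log\Psi_l|^2$ reproduce $\frac{h^2}{8}\mathcal{I}(\rho)$ and $\frac{1}{2}(\nabla_GS,\nabla_GS)_\rho$ exactly. Once this identification is checked the remainder is structural, and the hypothesis that $\mathbb{W}$ is semi-positive definite enters only through coercivity and the non-negativity of $\mathcal{W}$ used in the existence argument.
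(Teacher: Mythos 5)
Your central inequality is exactly the paper's: writing $\Psi_j=\sqrt{\rho_j}e^{iS_j/h}$, the kinetic term splits as $h^2\mathcal{E}_{kin}(\Psi)=\frac{1}{2}(\nabla_GS,\nabla_GS)_\rho+\frac{h^2}{8}\mathcal{I}(\rho)$, so $\mathcal{E}(\Psi)\geq\mathcal{E}(\sqrt{|\Psi|^2})$ with equality iff $S$ is constant on the connected graph; the bookkeeping you worried about does check out (modulo an evidently missing $\omega_{jl}$ in the displayed definition of $\mathcal{E}_{kin}$). Your identification of $\nu^g$ via Euler-type homogeneity of $\mathcal{I}$ and $\mathcal{W}$ also matches the paper.

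The gap is in how you connect the stationary state to the minimizer. The corollary asserts that \emph{the stationary state} --- i.e.\ the critical point of $\min_{\rho\in\mathcal{P}(G)}\mathcal{E}(\sqrt{\rho})$ produced by Theorem \ref{th5}(v) --- \emph{is} a ground state. You instead construct a global minimizer $\rho^g$ from scratch by coercivity and observe that it yields a stationary solution; this proves that \emph{some} ground state of the stated form exists, but not that an arbitrary interior stationary state is a (indeed the) global minimizer, since a critical point of a nonconvex functional need not minimize. The paper closes this by proving that $\mathcal{E}(\sqrt{\rho})=\frac{h^2}{8}\mathcal{I}(\rho)+\mathcal{V}(\rho)+\mathcal{W}(\rho)$ is \emph{strictly convex} on $\mathcal{P}_o(G)$: convexity of $\mathcal{W}$ is exactly where the hypothesis that $\mathbb{W}$ is semi-positive definite enters, and strict convexity of $\mathcal{I}$ is established by an explicit Hessian computation, $\sigma^T\mathrm{Hess}\,\mathcal{I}(\rho)\sigma=\frac{1}{2}\sum_{(l,j)\in E}t_{lj}\bigl(\frac{\sigma_j}{\rho_j}-\frac{\sigma_l}{\rho_l}\bigr)^2$ with $t_{lj}>0$, which vanishes on the tangent space only at $\sigma=0$. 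In your argument the hypothesis on $\mathbb{W}$ does no real work (a quadratic form is bounded on the compact simplex regardless of its sign), which is a signal that you have missed its actual role. To repair the proof, either add the convexity argument, or restate what you prove as an existence claim rather than the assertion that the stationary state is the ground state.
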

\begin{proof}
From Theorem 5 (v), $\rho^g$ is a critical point of $\min_{\rho\in\mathcal{P}_o(G)}~\mathcal{E}(\sqrt{\rho})$ and $\nu^g$ is defined as above. We only need to prove $\Psi^g$ is the minimizer of problem \eqref{g_s}. In fact, \begin{equation*}
\min_{\Psi}\{~\mathcal{E}(\Psi)~:~\sum_{j=1}^n|\Psi_j|^2=1\}\geq \min_{\rho\in \mathcal{P}(G)}~\mathcal{E}(\sqrt{\rho})\ .\end{equation*}
because
\begin{equation*}
\begin{split}
\mathcal{E}(\Psi)=&\frac{1}{4}\sum_{(j,l)\in E}\omega_{jl}(S_j-S_l)^2g_{jl}(\rho)+\frac{h^2}{8}\mathcal{I}(\rho)+\mathcal{V}(\rho)+\mathcal{W}(\rho)\\
\geq &\frac{h^2}{8}\mathcal{I}(\rho)+\mathcal{V}(\rho)+\mathcal{W}(\rho)=\mathcal{E}(\sqrt{\rho})\ ,
\end{split}
\end{equation*}
the equality holds if and only if $S_j=S_l$, for any $(j,l)\in E$. Since $G$ is a connected graph, then a ground state $\Psi^g=\sqrt{\rho^g}e^{{i}\frac{S^g}{h}}$ has the following structure: 
$$\rho^g=\arg\min_{\rho\in\mathcal{P}(G)}\mathcal{E}(\sqrt{\rho}) \quad \textrm{and} \quad \textrm{$S_1^g=S_2^g=\cdots =S_n^g$\ .} $$

Next, we show that the function $\mathcal{E}(\sqrt{\rho})=\frac{h^2}{8}\mathcal{I}(\rho)+\mathcal{W}(\rho)+\mathcal{V}(\rho)$ is strictly convex.
If this is true, we can conclude that $\rho^g$ is a unique minimizer, which is the ground state.

Notice that $\mathcal{W}(\rho)=\frac{1}{2}\sum_{j=1}^n\sum_{l=1}^n\mathbb{W}_{jl}\rho_j\rho_l$, $\mathcal{V}(\rho)=\sum_{j=1}^n \mathbb{V}_j\rho_j$ are convex functionals. So we only need to prove 
\begin{center}
{\em $\mathcal{I}(\rho)$ is a strict convex functional in $\mathcal{P}_o(G)$\ .}
\end{center}
We show this result by proving
\begin{equation}\label{claim3}
\min_{\sigma\in T_\rho\mathcal{P}_o(G)}\{\sigma^T \textrm{Hess}_{\mathbb{R}^n}\mathcal{I}(\rho) \sigma~:~\sigma^T\sigma=1\}>0\ .
\end{equation}
Since the Hessian matrix of $\mathcal{I}$ is
\begin{equation*}
\frac{\partial^2}{\partial\rho_l\partial\rho_j}\mathcal{I}(\rho)=
\begin{cases}
-\frac{1}{\rho_l\rho_j}\omega_{lj}t_{lj}&\textrm{if $l\in N(j)$}\ ;\\
\frac{1}{\rho_j^2}\sum_{l\in N(j)}\omega_{lj}t_{lj} &\textrm{if $l=j$}\ ;\\
0& \textrm{otherwise}\ ,\\
\end{cases}
\end{equation*}
where 
\begin{equation}\label{tij}
t_{lj}=(\rho_l-\rho_j)(\log\rho_l-\log\rho_j)+(\rho_l+\rho_j)>0\ ,
\end{equation}
hence
\begin{equation*}
\begin{split}
\sigma^T \textrm{Hess}_{\mathbb{R}^n}\mathcal{I}(\rho) \sigma
=&\frac{1}{2}\sum_{(l,j)\in E}  t_{lj}\{(\frac{\sigma_j}{\rho_j})^2+ (\frac{\sigma_l}{\rho_l})^2  -2 \frac{\sigma_l}{\rho_l} \frac{\sigma_j}{\rho_j}\}\\
=&\frac{1}{2}\sum_{(l,j)\in E} t_{lj}(\frac{\sigma_j}{\rho_j}-\frac{\sigma_l}{\rho_l})^2\geq 0\ .
\end{split}
\end{equation*}
So $\textrm{Hess}_{\mathbb{R}^n}\mathcal{I}$ is a semi-positive definite matrix. 

Suppose \eqref{claim3} is not true, there exists a unit vector $\sigma^*\in T_\rho\mathcal{P}_o(G)$, such that 
\begin{equation*}
\sigma^{*T} \textrm{Hess}_{\mathbb{R}^n}\mathcal{I}(\rho) \sigma^*=\frac{1}{2}\sum_{(l,j)\in E} t_{lj}(\frac{\sigma_l^*}{\rho_l}-\frac{\sigma^*_j}{\rho_j})^2= 0\ .
\end{equation*}
Then $\frac{\sigma_1^*}{\rho_1}=\frac{\sigma_2^*}{\rho_2}=\cdots \frac{\sigma_n^*}{\rho_n}=0$.
Combining with $\sum_{j=1}^n\sigma_j^*=0$, we have $\sigma_1^*=\sigma_2^*=\cdots=\sigma_n^*=0$, which contradicts that $\sigma^*$ is a unit vector.
\end{proof}
It is worth mentioning that  we have the following eigenvalue problem at the ground state:
\begin{equation}\label{ground}
\nu \Psi_j= -\frac{h^2}{2}\Delta_G \Psi_j+ \mathbb{V}_j\Psi_j+\Psi_j\sum_{l=1}^n \mathbb{W}_{jl}|\Psi_l|^2\ ,\quad (\Psi_j)_{j=1}^n\in \mathbb{R}^n\ .
\end{equation}
The solution of \eqref{ground} is the ground state configuration, where $|\Psi|=\sqrt{\rho^g}$ and $\nu=\mathcal{E}(\sqrt{\rho^g})+\mathcal{E}_{int}(\sqrt{\rho^g})$ is the associated energy level. 

\section{Linearized problems}
In this section, 
we study the linearized problem near the ground state. 
Consider the Hamiltonian system \begin{equation*}
\frac{d}{dt}\begin{pmatrix}   \rho \\  S \end{pmatrix}=\mathbb{J}\begin{pmatrix}\frac{\partial}{\partial  \rho}\mathcal{H}\\ \frac{\partial}{\partial  S}\mathcal{H}\end{pmatrix}\ .
\end{equation*}
The ground state $(\rho^g, S^g(t))$ can be viewed as its equilibrium solution, which is clearly the critical point of Hamiltonian $$\mathcal{H}(\rho, S)=\frac{1}{2}(\nabla_G S, \nabla_G S)_{\rho}+\frac{h^2}{8}\mathcal{I}(\rho)+\frac{1}{2}\rho^T\mathbb{W}\rho+\mathbb{V}^T \rho\ .$$ 
Consider the linearized problem of \eqref{GNLS} 
\begin{equation*}
\frac{d}{dt}z= H^{(2)} z\ ,
\end{equation*}
where $z\in \mathbb{R}^{2n}$,  $H^{(2)}\in \mathbb{R}^{2n\times 2n}$ is the {\em Hamiltonian matrix} at the equilibrium $( \rho ^g, S^g)$.

Because $(S^g_j)_{j=1}^n$ is a constant vector, we obtain a simple structure for $H^{(2)}$: 
\begin{equation}\label{Hessian}
H^{(2)}:=\mathbb{J}\cdot \textrm{Hess}_{\mathbb{R}^{2n}}\mathcal{H}( \rho , S)|_{( \rho ^g, S^g)}=\begin{pmatrix}
0&  L_2( \rho ^g)\\
 -\mathbb{W}-\frac{h^2}{8}\textrm{Hess}_{\mathbb{R}^n}\mathcal{I}(\rho^g) & 0
\end{pmatrix}\ .
\end{equation}
We estimate the eigenvalue of \eqref{Hessian} in a particular case: 
\begin{equation}\label{BEC}
h i \frac{d\Psi_j}{dt}=-\frac{h^2}{2}\Delta_G\Psi|_j+\alpha \Psi_j |\Psi_j|^2\ .
\end{equation} 
This equation is obtained from \eqref{CNLS} by taking $\mathbb{V} = 0$ and $\mathbb{W}=\alpha \mathbb{I}$. It can be viewed as a discrete version of Gross-Pitaevskii equation (GPE), which has been proposed to model the Bose-Einstein condensate. 

\noindent\textbf{Proposition:} For discrete GPE \eqref{BEC}, $H^{(2)}$ has eigenvalues  $$\alpha_k^+:=+i\sqrt{\frac{1}{4}\lambda_k^2h^2+\frac{\alpha \lambda_k}{n}} \ , \quad\alpha_k^-:=-i\sqrt{\frac{1}{4}\lambda_k^2h^2+\frac{\alpha \lambda_k}{n}}\ , $$ with associated eigenvectors 
 $$ w_k^+:=\begin{pmatrix} v_k\\  i\sqrt{\frac{n^2h^2}{4}+\frac{\alpha n}{\lambda_k}} v_k\end{pmatrix}\ ,\quad w_k^-:=\begin{pmatrix} {i} v_k\\ 
 \sqrt{\frac{n^2h^2}{4}+\frac{\alpha n}{\lambda_k}} v_k\end{pmatrix}\in \mathbb{C}^{2n}\ ,
$$
i.e. 
$$H^{(2)}w_k^+=\alpha_k^+w_k^+\ ,\quad H^{(2)}w_k^-=\alpha_k^-w_k^-\ .$$
Here $Lv_k=\lambda_k v_k$. $\lambda_k\geq 0$, $v_k\in \mathbb{R}^n$ are $k$-th eigenvalue and eigenvector of graph Laplacian matrix $L=(L(j,l))_{1\leq j,l\leq n}\in \mathbb{R}^{n\times n}$, where 
$L(j,l)=\begin{cases}
-\omega_{jl}&\textrm{if $l\in N(j)$\ ;}\\
\sum_{l\in N(j)}\omega_{jl} &\textrm{if $l=j$\ ;}\\
0& \textrm{otherwise\ .}\ \\
\end{cases}$
\begin{proof}
Denote $\textbf{1}=(\frac{1}{n})_{j=1}^n$. From Karush-Kuhn-Tucker conditions, one can easily find $\rho^g=\textbf{1}$ is the critical point of  $\{\frac{h^2}{8}\mathcal{I}(\rho)+\frac{\alpha}{2} \sum_{j=1}^n\rho_j^2~:~\rho\in \mathcal{P}(G)\}$. In this case, $L_1(\textbf{1})=2n L$, $L_2(\textbf{1})=\frac{1}{n}L$. So the matrix \eqref{Hessian} becomes
\begin{equation}\label{Hessian1}
H^{(2)}=\mathbb{J}\cdot \textrm{Hess}_{\mathbb{R}^{2n}}\mathcal{H}( \rho , S)|_{(\textbf{1}, S^g)}
=\begin{pmatrix}
0&  \frac{1}{n}L \\
-\alpha \mathbb{I}-\frac{nh^2}{4}L& 0
\end{pmatrix}\ .
\end{equation}
In fact, we can find all eigenvalues and eigenvectors of \eqref{Hessian}.
Notice that $L\in \mathbb{R}^{n\times n}$ is a semi-positive matrix, and denote $\lambda_k\in \mathbb{R}$, $v_k\in \mathbb{R}^n$, as the $k$-th eigenvalue and eigenvector of $L$. Therefore one can check that 
 \begin{equation*}
 \begin{split}
H^{(2)}w_k^+=& \begin{pmatrix}
0&  \frac{1}{n}L \\
-\alpha \mathbb{I}-\frac{nh^2}{4}L& 0
\end{pmatrix}  \begin{pmatrix} v_k\\  i\sqrt{\frac{n^2h^2}{4}+\frac{\alpha n}{\lambda_k}} v_k\end{pmatrix}  \\
=& \begin{pmatrix}  i \frac{1}{n} \sqrt{\frac{n^2h^2}{4}+\frac{\alpha n}{\lambda_k}}\cdot \lambda_k v_k\\  -\alpha v_k-\frac{nh^2}{4}\lambda_k v_k\end{pmatrix}  \\
=&+i\sqrt{\frac{\lambda^2_kh^2}{4}+\frac{\alpha \lambda_k}{n}} \begin{pmatrix} v_k\\  i\sqrt{\frac{n^2h^2}{4}+\frac{\alpha n}{\lambda_k}} v_k\end{pmatrix}\\
=& \alpha_k^+ w_k^+\\
\end{split}
\end{equation*}
Similarly, $H^{(2)}w_k^-=\alpha_k^-w_k^-$.
\end{proof}
From these eigenvalues, when $\alpha> -\frac{n}{4}\lambda_k h^2$, the solution $\rho=\textbf{1}$, $\nabla_GS=0$ is stable for \eqref{BEC}. 
When $\alpha= -\frac{n}{4}\lambda_k h^2$, bifurcations may happen. 

\section{Examples}
Finally, we demonstrate \eqref{GNLS} and \eqref{ground} by two numerical examples. 
\begin{example}[NLS on a two points graph]
Consider a Hamiltonian:
\begin{equation*}
\mathcal{H}( \rho , S)=\frac{1}{2}(S_1-S_2)^2g_{12}(\rho)+\frac{h^2}{8}(\log \rho_1-\log \rho_2)^2g_{12}(\rho)+\mathbb{V}_1\rho_1+\mathbb{V}_2\rho_2\ ,
\end{equation*}
where $\mathbb{V}_1=\mathbb{V}_2=c$. In this case, the solution of \eqref{GNLS}, $( \rho _1(t), \rho_2(t), S_1(t)-S_2(t))\in \mathbb{R}^3$, can be plotted using a phase portrait.
\begin{figure}
{\includegraphics[scale=0.3]{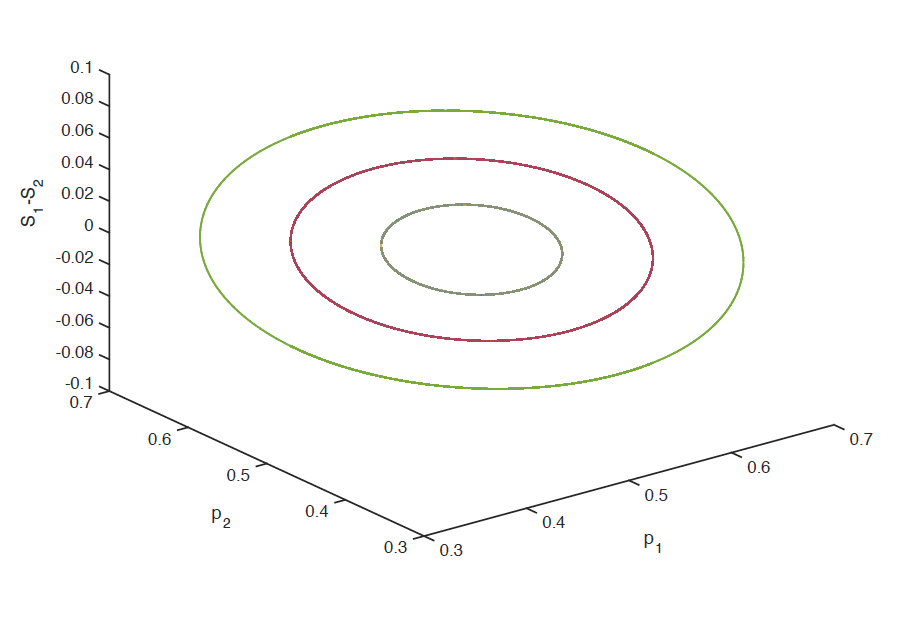}}
\caption{The phase portrait of $( \rho _1(t), \rho_2(t), S_1(t)-S_2(t))$ with different initial conditions.}
\label{Fig}
\end{figure}
In Figure \ref{Fig}, each circle represents a trajectory of \eqref{GNLS}. The ground state $(\frac{1}{2}, \frac{1}{2},0)$ is in the center of all these circles, so it is spectrally stable. 
\end{example}

\begin{example}[Ground state]
We demonstrate the ground states on a 1-D lattice graph. Set $\mathbb{W}=0$. Consider the following minimization problem 
$$\rho^g=\arg\min_{\rho\in \mathcal{P}(G)}\sum_{j=1}^n \mathbb{V}_j\rho_j+\frac{h^2}{8}\mathcal{I}(\rho)\ ,$$
where $\mathbb{V}_j=\frac{x_j^2}{2}$. We compute the above minimizer numerically \cite{li-thesis} in the interval $[-5,5]$ with $n=20$. From Figure \ref{fig3}, we observe that the ground state approaches to the delta measure supported at $0$ when $h \rightarrow 0$. This captures the exactly same effect in continuous states, in which the ground state is a Gaussian distribution with variance $h^2$ \cite{Bao1}.
\begin{figure}
\centering
{\includegraphics[scale=0.3]{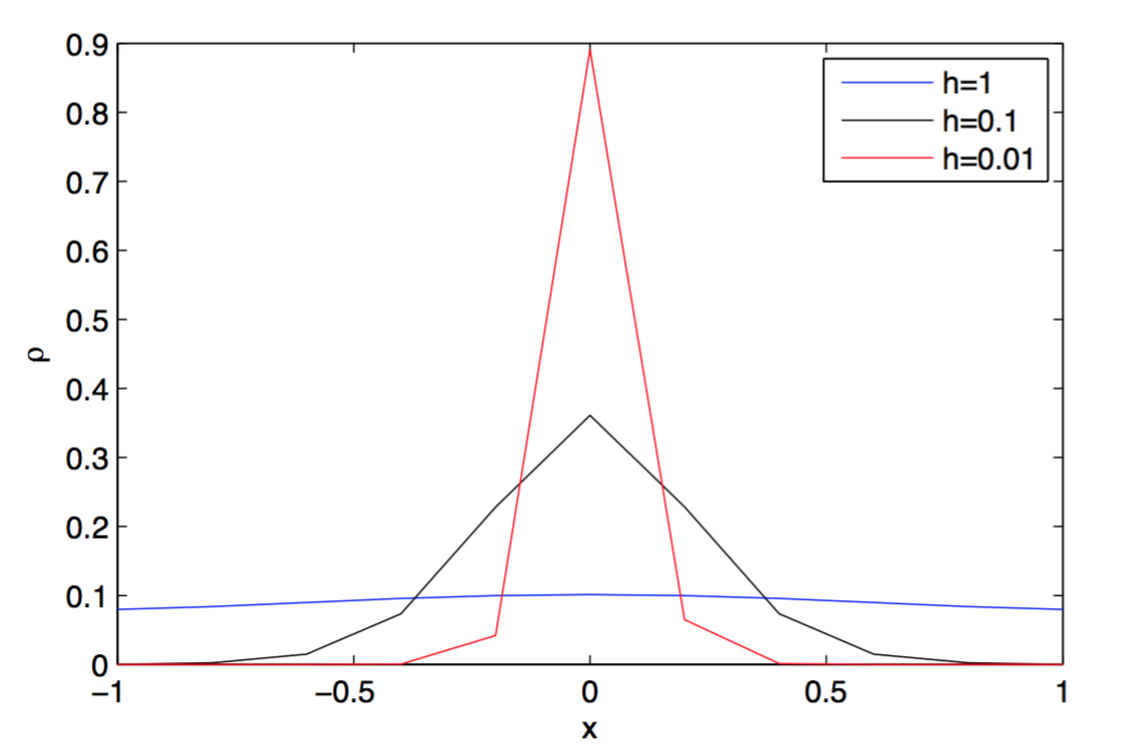}}
\caption{The plot of ground state $\rho^g$. The blue, black, red curves represents $h=1$, $0.1$, $0.01$, respectively.}
\label{fig3}
\end{figure}
\end{example}

\section{Conclusions}
In this paper we have introduced a new NLS on finite graphs \eqref{GNLS}. Compared to the existing work, \eqref{GNLS} has the following distinct features: First, the discrete NLS is introduced via discrete optimal transport. This formulation provides a way to study the discrete NLS from geometric viewpoint; Second, the discrete Fisher information $\mathcal{I}(\rho)$ is applied to construct Hamiltonian system. Because of it, \eqref{GNLS} conserves total energy and matches the stationary solution. Last but not the least, it introduces the ground state on graph by \eqref{ground}. Studying the stability problem around the discrete ground state introduces a Hamiltonian matrix, which is a symplectic composition of two modified graph Laplacian matrices. These give insights of the system that can be explored in the future.

\textbf{Acknowledgement}: We would like to thank Prof. Eric Carlen, Prof. Paul Goldbart and Dr. Jun Lu for fruitful discussions.

\end{document}